\newcommand{\Rmnum}[1]{\expandafter\@slowromancap\romannumeral #1@}
\newtheorem{theorem}{Theorem}
\newtheorem{lemma}{Lemma}
\newtheorem{remark}{Remark}
\theoremstyle{definition}
\newtheorem{definition}{Definition}
\newtheorem{proposition}{Proposition}
\tikzstyle{vertex}=[circle, draw, inner sep=0pt, minimum size=3pt]
\numberwithin{equation}{section} %%公式编号跟随章节
\def\qed{\hfill$\Box$\vspace{12pt}}
\long\def\delete#1{}
\begin{document}
\title
{\bf\Large  Extremality of graph entropy based on degrees of uniform hypergraphs with few edges\thanks{Supported by NSFC (Nos.~11531011, 11571135, 11671320 and 11701441) and the China Postdoctoral Science Foundation (No. 2016M600813).}}

\date{}
\author{
\small Dan Hu$^a$,
Xueliang Li$^b$, Xiaogang Liu$^a$,
Shenggui Zhang$^{a,}$\thanks{Corresponding author. E-mail addresses: hudan@mail.nwpu.edu.cn, lxl@nankai.edu.cn, xiaogliu@nwpu.edu.cn, sgzhang@nwpu.edu.cn} \\[2mm]
\small $^a$Department of Applied Mathematics, Northwestern Polytechnical University,\\
\small Xi'an, Shaanxi 710072, P.R.~China\\
\small $^b$Center for Combinatorics, Nankai University,\\
\small Tianjin 300071, P.R.~China}
\maketitle
\begin{abstract}
%Many graph invariants have been used for the construction of entropy-based measures to characterize the structure of graphs.

Let $\mathcal{H}$ be a hypergraph with $n$ vertices. Suppose that $d_1,d_2,\ldots,d_n$ are degrees of the vertices of $\mathcal{H}$. The \emph{$t$-th graph entropy based on degrees of $\mathcal{H}$} is defined as
$$
I_d^t(\mathcal{H}) =-\sum_{i=1}^{n}\left(\frac{d_i^{t}}{\sum_{j=1}^{n}d_j^{t}}\log\frac{d_i^{t}}{\sum_{j=1}^{n}d_j^{t}}\right) =\log\left(\sum_{i=1}^{n}d_i^{t}\right)-\sum_{i=1}^{n}\left(\frac{d_i^{t}}{\sum_{j=1}^{n}d_j^{t}}\log d_i^{t}\right),
$$
where $t$ is a real number and the logarithm is taken to the base two. In this paper we obtain upper and lower bounds of $I_d^t(\mathcal{H})$ for $t=1$, when $\mathcal{H}$ is among all uniform supertrees, unicyclic uniform hypergraphs and bicyclic uniform hypergraphs, respectively.
%In this paper, we will discuss the extremal properties of the above graph entropy based on degree powers. we consider the graph entropies based on the given information functional by using degree powers. Further, extremal properties of graph entropies of certain hypergraphs haven been studied.

\medskip
\noindent {\bf Keywords:} Shannon's entropy; Graph entropy; Degree sequence; Hypergraph
\\
\noindent {\bf Mathematics Subject Classification:} 05C50
\smallskip

\end{abstract}

\section{Introduction}

Throughout this paper, logarithms are always taken to the base two. Let $p =(p_1, p_2, \ldots , p_n)$ be a probability distribution, that is, $0 \leq p_i \leq1 $ and $\sum_{i=1}^{n}p_i=1$. The \emph{Shannon's entropy} is defined as
\begin{equation}\label{ShanEntr1}
 I(p)=-\sum_{i=1}^{n}(p_i\log p_i).
\end{equation}
This notion was first proposed by Shannon \emph{et al.} \cite{Shannon} in 1949, which is one of the most important metrics in information theory as a measure of unpredictability of information content.

Applying Shannon's entropy formula (\ref{ShanEntr1}) with a probability distribution defined on the vertex set or edge set of a graph, one can obtain a numerical value, which is usually called a \emph{graph entropy}. Of course, different probability distributions defined on graphs contribute to different graph entropies.  Up until now, a lots of graph entropies have been proposed (see \cite{Dehmer, Dehmer2, Mowshowitz1, Rashevsky, Trucco,BonchevI, Mowshowitz2, Mowshowitz3, Mowshowitz4, Korner, Korner5, xLi, Gabor-Simonyi}). For more results on the theory and applications of the graph entropy, we refer the reader to four survey papers \cite{Dehmer2, xLi1, Gabor-Simonyi95, Gabor-Simonyi}.

Let $G$ be a finite, undirected and connected graph with vertex set $V_G=\{v_1,v_2,\ldots,v_n\}$. Define a mapping
$f:V_G\rightarrow \mathbb{R}^{+}$, and a probability distribution $p=(p(v_1), p(v_2),\ldots,p(v_n))$, where
\[p(v_i)=\frac{f(v_i)}{\sum_{j=1}^{n}f(v_j)}\]
for each vertex $v_i\in V_G$. The graph entropy of $G$ based on $f$ is defined as
\begin{equation}\label{GraEnto1}
I_f(G)=-\sum_{i=1}^{n}\left(\frac{f(v_i)}{\sum_{j=1}^{n}f(v_j)}\log\frac{f(v_i)}{\sum_{j=1}^{n}f(v_j)}\right).
\end{equation}
Such a graph entropy was introduced by Dehmer in \cite{Dehmer}. Note that in Equation (\ref{GraEnto1}), $f$ can be any mapping. Thus, by (\ref{GraEnto1}), Cao \emph{et al.} \cite{Cao} gave a novel graph entropy based on the degrees of graphs as follows.

\begin{definition}[See \cite{Cao}]\label{Def:2}
Let $G$ be a connected graph with vertex set $V_G=\{v_1,v_2,\ldots,v_n\}$.
Denote by $d_i$ the degree of the vertex $v_i$.
Then the \emph{$t$-th graph entropy based on degrees of $G$} is defined as
\[I_d^t(G) =-\sum_{i=1}^{n}\left(\frac{d_i^{t}}{\sum_{j=1}^{n}d_j^{t}}\log\frac{d_i^{t}}{\sum_{j=1}^{n}d_j^{t}}\right) =\log\left(\sum_{i=1}^{n}d_i^{t}\right)-\sum_{i=1}^{n}\left(\frac{d_i^{t}}{\sum_{j=1}^{n}d_j^{t}}\log d_i^{t}\right),\]
where $t$ is a real number.
\end{definition}

In \cite{Cao}, Cao \emph{et al.} studied the extremal properties of $I_d^t(G)$ for $t=1$, when $G$ is in the class of trees, unicyclic graphs, bicyclic graphs, chemical trees or chemical graphs. For $t>0$, Cao \emph{et al.} conjectured that the path on $n$ vertices is the unique tree on $n$ vertices that maximizes $I_d^t(G)$ in the class of trees, and the star on $n$ vertices is the unique tree on $n$ vertices that minimizes $I_d^t(G)$ in the class of trees.
In \cite{Ilic}, Ili\'{c} showed that the upper bound is correct for $t>0$, and the lower bound is correct only for $t\ge1$ in the conjecture.

In this paper, we extend the results of \cite{Cao} to $k$-uniform hypergraphs.
 A \emph{hypergraph} $\mathcal{H} = (V_\mathcal{H},E_\mathcal{H})$ with $n$ vertices and $m$ edges consists of a set of vertices $V_\mathcal{H} = \{1, 2, \ldots, n\}$ and a set of edges $E_\mathcal{H}= \{e_1, e_2, \ldots, e_m\}$, where
$e_i$ is a nonempty subset of $V_\mathcal{H}$ for $i = 1, 2, \ldots, m$. If $|e_i| = k$ for $i = 1, 2, \ldots,m$, then $\mathcal{H}$ is called a \emph{$k$-uniform
hypergraph}. Clearly, ordinary graphs are referred to as $2$-uniform
hypergraphs. A $k$-uniform hypergraph $\mathcal{H}$ is called \emph{simple} if there are no multiple edges in $\mathcal{H}$, that is, all edges in $\mathcal{H}$ are distinct. In  the sequel of this paper, we assume that all hypergraphs considered are simple and $k$-uniform with $k \geq 3$.

In a hypergraph, a vertex $v$ is said to be \emph{incident} to an edge $e$ if $v \in e$. Two vertices are said to be \emph{adjacent} if
there is an edge that contains both of these vertices. Two edges are said to be \emph{adjacent} if their intersection is not empty.
For a $k$-uniform hypergraph $\mathcal{H}$,  the \emph{degree} $d_v$ of a vertex $v \in V_\mathcal{H}$ is defined as $d_v =
|\{e_j : v \in e_j \in E_\mathcal{H}\}|$. A vertex of degree one is called a \emph{pendent vertex}. Otherwise, it is called a \emph{non-pendent vertex}. An edge $e \in E_\mathcal{H}$ is called a \emph{pendent edge} if $e$ contains exactly $k-1$ pendent vertices. Otherwise, it is called a \emph{non-pendent edge}.

%In a hypergraph $\mathcal{H}$, a path of length $q$ is defined to be an alternating sequence
%of vertices and edges $v_1, e_1, v_2, e_2,\ldots, v_q , e_q, v_{q+1}$ such that
%
%(1) $v_1,\ldots, v_{q+1}$ are all distinct vertices of $\mathcal{H}$,
%
%(2) $e_1,\ldots, e_q$ are all distinct edges of $\mathcal{H}$,
%
%(3) $v_r, v_{r+1}\in e_r$ for $r = 1,\ldots, q$.
%
%If $q> 1$ and $v_1 = v_{q+1}$, then this path is called a cycle of length $q$. A hypergraph $\mathcal{H}$
%is connected if there exists a path starting at $v$ and terminating at $u$ for all $v$, $u\in V$,
%and is called acyclic if it contains no cycle.
A \emph{walk} $W$ of length $l$ in $\mathcal{H}$ is a sequence of alternating vertices and edges: $v_0e_1v_1e_2\cdots e_lv_l$, where $\{v_{i-1}, v_i\}\subseteq e_i$ for $i=1,\ldots,l$. If $v_0=v_l$, then $W$ is called a \emph{circuit}. A walk of $\mathcal{H}$ is called a \emph{path} if no vertices and edges are repeated. A circuit $\mathcal{H}$ is called a \emph{cycle} if no vertices and edges are repeated except $v_0=v_l$. The hypergraph $\mathcal{H}$ is said to be \emph{connected} if every two vertices are connected by a walk. A hypergraph $\mathcal{H}$ is called a \emph{linear hypergraph} if each
pair of the edges of $\mathcal{H}$ has at most one common vertex. Otherwise, it is called a \emph{non-linear hypergraph}.

The following concept of \emph{power hypergraph} was introduced in \cite{Hu}.

\begin{definition}[See \cite{Hu}]\label{Def:3}
Let $G=(V,E)$ be an ordinary graph. For an integer $k\geq3$, the \emph{$k$-th power} of
$G$, denoted by $G^{k}:=(V^{k},E^{k})$, is defined as the $k$-uniform hypergraph with the set of vertices
$V^{k}=V\cup(\cup_{e\in E}\{i_{e,1},\ldots,i_{e,k-2}\})$ and the set of edges $E^{k}=\{e \cup \{i_{e,1},\ldots,i_{e,k-2}\}\big|e\in E\}$, where $i_{e,1},\ldots,i_{e,k-2}$ are new added vertices for $e$.
\end{definition}
\begin{definition}[See \cite{Hu, Li}]\label{Def:4}
The $k$-th power of an ordinary tree is called a $k$-uniform \emph{hypertree}.
\end{definition}

The \emph{star} with $n$ vertices, denoted by $S_n:= (\{v_1, \ldots, v_n\}, E_S)$, is a graph
with the vertex set $\{v_1, \ldots, v_n\}$ in which $v_iv_j\in E_S$ if and only if $i = 1$ or $j = 1$.

\begin{definition}[See \cite{Hu}]\label{Def:Hstar}
The $k$-th power of $S_n$, denoted by $S^{k}_n$, is called a \emph{hyperstar} (see Figure \ref{EHypers} (a) for an example).
\end{definition}

\begin{definition}[See \cite{Fan}]\label{Def:6}
Let $\mathcal{H}$ be a $k$-uniform hypergraph with $n$ vertices, $m$ edges and $l$ connected components. The \emph{cyclomatic number} of $\mathcal{H}$ is defined to be
$$
c(\mathcal{H})=m(k-1)-n+l.
$$
 So, a hypergraph $\mathcal{H}$ can be called a \emph{$c(\mathcal{H})$-cyclic hypergraph}.
\end{definition}

 \begin{definition}[See \cite{Li}]\label{Def:acyclic}
A hypergraph is said to be \emph{acyclic} if it does not contain any cycles.
 \end{definition}

A connected hypergraph $\mathcal{H}$ is acyclic if and only if $c(\mathcal{H})=0$.

\begin{proposition}[See \cite{Li}]\label{Prop:1}
A connected $k$-uniform hypergraph with $n$ vertices and $m$ edges is \emph{acyclic} if and only if $m(k-1)=n-1$.
\end{proposition}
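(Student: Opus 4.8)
The plan is to avoid a circular shortcut and then reduce the whole statement to the edge count of a tree. Note first that for a \emph{connected} $\mathcal{H}$ we have $l=1$, so $c(\mathcal{H})=m(k-1)-n+1$, and the condition $c(\mathcal{H})=0$ is \emph{literally} the condition $m(k-1)=n-1$; thus the sentence ``acyclic $\iff c(\mathcal{H})=0$'' preceding the statement is the very claim to be proved, and a genuine argument is needed. I would give one through the \emph{incidence bipartite graph} $B=B(\mathcal{H})$: its two vertex classes are $V_\mathcal{H}$ and $E_\mathcal{H}$, and $v\in V_\mathcal{H}$ is joined to $e\in E_\mathcal{H}$ exactly when $v\in e$. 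Since $\mathcal{H}$ is $k$-uniform, every edge-node has degree $k$ in $B$, so $B$ has $n+m$ nodes and $\sum_{e}|e|=mk$ edges. Everything will follow from the equivalence
\[
\mathcal{H}\ \text{is connected and acyclic}\quad\Longleftrightarrow\quad B\ \text{is a tree},
\]
together with the fact that a tree on $n+m$ nodes has $n+m-1$ edges: equating $mk=n+m-1$ gives exactly $m(k-1)=n-1$.

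The connectivity half of the equivalence is immediate from the definitions in the excerpt. A walk $v_0e_1v_1\cdots e_lv_l$ of $\mathcal{H}$ (where $\{v_{i-1},v_i\}\subseteq e_i$) is by inspection an alternating $v$--$e$ walk in $B$, and conversely any walk in the bipartite graph $B$ alternates between the two classes and so reads off as a walk of $\mathcal{H}$; therefore $\mathcal{H}$ is connected if and only if $B$ is connected.

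The acyclicity half is the crux, and I expect the bookkeeping at the shortest length to be the main obstacle. I would show that a cycle $v_0e_1v_1\cdots e_lv_l=v_0$ of $\mathcal{H}$ (with $v_0,\dots,v_{l-1}$ distinct and $e_1,\dots,e_l$ distinct) is the same object as a cycle of $B$: the alternating sequence $v_0,e_1,v_1,\dots,e_l,v_0$ has no repeated node and closes up, so it is a cycle of $B$; conversely any cycle of $B$, being bipartite, alternates between the classes and so returns a hypergraph cycle. The delicate point is the minimal case $l=2$, i.e. two distinct edges sharing two vertices $u,v$: this yields the hypergraph cycle $u\,e_1\,v\,e_2\,u$, which matches the $4$-cycle $u,e_1,v,e_2$ in $B$, and $4$ is exactly the minimal cycle length in a simple bipartite graph, so no hypergraph cycle is lost or spuriously created at the bottom. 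Hence $\mathcal{H}$ is acyclic if and only if $B$ has no cycle, and combined with the previous paragraph this proves the displayed equivalence.

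Both implications now follow. If $\mathcal{H}$ is connected and acyclic then $B$ is a tree, so $mk=(n+m)-1$ and $m(k-1)=n-1$; conversely, if $\mathcal{H}$ is connected with $m(k-1)=n-1$ then $B$ is connected with $n+m-1$ edges on $n+m$ nodes, hence a tree, so $\mathcal{H}$ is acyclic. As an independent check one could instead induct on $m$: the $l=2$ observation above shows an acyclic $\mathcal{H}$ is \emph{linear}, a connected acyclic hypergraph with $m\ge1$ edges then has a pendent edge (rooting the tree $B$, an edge-node of greatest depth has its $k-1$ children all leaves, i.e. pendent vertices of $\mathcal{H}$), and deleting it removes one edge and $k-1$ pendent vertices, with base case $m=1$, $n=k$; the converse reads off from Definition \ref{Def:6}, since any cycle forces $c(\mathcal{H})\ge1$, that is $m(k-1)\ge n$. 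I would nonetheless present the incidence-graph argument as the main proof, since it settles both directions at once and confines all the subtlety to the single cycle-correspondence step.
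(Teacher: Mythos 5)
Your proof is correct, but there is nothing in the paper to compare it with: Proposition \ref{Prop:1} is quoted from \cite{Li}, and the paper supplies no proof, treating it (and the sentence ``a connected hypergraph $\mathcal{H}$ is acyclic if and only if $c(\mathcal{H})=0$'' just before it) as known. Your observation that, for connected hypergraphs, that sentence and the proposition are literally the same statement --- so neither may be invoked to prove the other --- is exactly right, and your incidence-graph argument supplies the missing self-contained proof. The reduction is clean: $B=B(\mathcal{H})$ is simple and bipartite with $n+m$ nodes and $mk$ edges by $k$-uniformity; walks in $\mathcal{H}$ and walks in $B$ transcribe into one another, giving the connectivity equivalence; and the cycle correspondence sends a hypergraph cycle of length $l$ to a $2l$-cycle of $B$ and back, with the minimal cases matching ($l=2$, two edges sharing two vertices, versus girth $4$ in a simple bipartite graph), so acyclicity transfers as well. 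Both implications then follow from the two standard tree facts (a tree on $N$ nodes has $N-1$ edges; a connected graph on $N$ nodes with $N-1$ edges is a tree). Two small remarks. First, the paper's definition of cycle, read literally, admits the degenerate length-one circuit $v_0e_1v_0$ as having ``no repetition except $v_0=v_l$''; you should say explicitly that cycles are taken to have length $l\ge 2$ (as in Berge), since otherwise ``acyclic'' is vacuous --- your treatment of $l=2$ as the minimal case shows you assume this, but it deserves a sentence. Second, in your alternative induction sketch the converse step (``any cycle forces $c(\mathcal{H})\ge 1$'') is asserted rather than proved and is essentially the statement itself; this does not matter, since your main argument does not rely on it.
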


\begin{definition}[See \cite{Li}]\label{Def:5}
A \emph{supertree} is a hypergraph which is both connected and acyclic.
\end{definition}

From Definitions \ref{Def:4} and \ref{Def:5}, we know that all $k$-uniform hypertrees are supertrees. Conversely, a $k$-uniform supertree $\mathcal{T}$ with at least two edges is a $k$-uniform hypertree if and only if each edge of $\mathcal{T}$ contains at most two non-pendent vertices. For example, Figure \ref{supertree} depicts a $5$-uniform supertree, which is not a hypertree.

\begin{figure}[h]
\vspace{0.5cm}
\centering
\begin{tikzpicture}[scale=2.25]
\filldraw [black]
(0.654,0.475)circle (0.7pt)
(0.2000 ,0.475)circle (0.7pt)
  ( 1.0580,   0.475)circle (0.7pt)
    (0.4520,    0.475)circle (0.7pt)
    (0.8560,    0.475)circle (0.7pt)
    (1.2560,    0.475)circle (0.7pt)
    %(0.9560,    0.475)circle (0.3pt)
    (0.01560,    0.475)circle (0.7pt)
    %(-0.070,    0.475)circle (0.3pt)
   (-0.1540,    0.475)circle (0.7pt)
   %(0.10480 ,   0.475)circle (0.7pt)
   (-0.3560 ,   0.475)circle (0.7pt)
   (-0.60,    0.475)circle (0.7pt)
    (-0.770,    0.475)circle (0.7pt)
   (-0.970,    0.475)circle (0.7pt)
   %(0.10480 ,   0.475)circle (0.7pt)
   (-1.170 ,   0.475)circle (0.7pt)
   (0.2000 ,0.675)circle (0.7pt)
   (0.2000 ,0.875)circle (0.7pt)
   (0.2000 ,1.075)circle (0.7pt)
   (0.2000 ,1.275)circle (0.7pt);
\draw[rotate around={0:(0.9,0.475)},blue] (0.9,0.475) ellipse (0.55 and 0.12);
\draw[rotate around={0:(-0.77,0.475)},blue] (-0.77,0.475) ellipse (0.55 and 0.12);
\draw[rotate around={90:(0.20,0.9)},blue] (0.20,0.9) ellipse (0.55 and 0.12);
\draw[rotate around={0:(0.00,0.475)},blue] (0.0,0.475) ellipse (0.55 and 0.12);
\end{tikzpicture}
\caption{A $5$-uniform supertree}\label{supertree}
\end{figure}
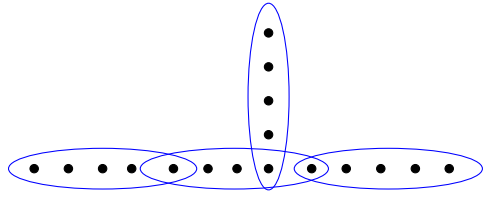

%\hd{We know that if each edge of a $k$-uniform supertree $\mathcal{T}$ contains at most two non-pendent vertices, then we obtain an ordinary tree $T$ by iteratively applying the process that deleting $k-2$ pendent vertices in each edge $e$ in $\mathcal{T}$. Therefore, $\mathcal{T}$ is the $k$-th power of $T$. From Definitions \ref{Def:4}, we know that $\mathcal{T}$ a $k$-uniform hypertree. Conversely, if a $k$-uniform supertree $\mathcal{T}$ with at least two edges is a $k$-uniform hypertree, then each edge of $\mathcal{T}$ contains at most two non-pendent vertices.}

\begin{definition}[See \cite{Fan}]\label{Def:Hunic}
If $\mathcal{H}$ is connected and contains exactly one cycle,
then $\mathcal{H}$ is called a \emph{unicyclic hypergraph}  (see Figure \ref{EHypers} (b) for an example).
\end{definition}

 \begin{proposition}[See \cite{Berge1}]\label{Prop:0}
 If $\mathcal{H}$ is a connected hypergraph with $n$ vertices and $m$ edges, then it has a unique cycle if and only if $\sum_{i=1}^{m}(|e_i|-1)=n$.
\end{proposition}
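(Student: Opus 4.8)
The plan is to translate the statement into a count of the cyclomatic number. Writing $\sigma(\mathcal{H}) := \sum_{i=1}^{m}(|e_i|-1)$, the generalization of Definition \ref{Def:6} to an arbitrary connected hypergraph reads $c(\mathcal{H}) = \sigma(\mathcal{H}) - n + 1$, so the asserted equivalence is exactly $\sigma(\mathcal{H}) = n \Longleftrightarrow c(\mathcal{H}) = 1$. I would therefore prove, more precisely, that a connected hypergraph has exactly one cycle if and only if its cyclomatic number equals $1$, which runs parallel to the acyclic case $c(\mathcal{H}) = 0$ of Proposition \ref{Prop:1}.

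The engine of the proof is a connected edge ordering $e_1, e_2, \ldots, e_m$, i.e.\ one in which each $e_i$ with $i \geq 2$ shares at least one vertex with $\bigcup_{j<i} e_j$; such an ordering exists because $\mathcal{H}$ is connected. Let $r_i$ be the number of vertices of $e_i$ that are already present when $e_i$ is inserted, so $r_1 = 0$ and $r_i \geq 1$ for $i \geq 2$. Counting newly introduced vertices gives $n = |e_1| + \sum_{i\geq 2}(|e_i| - r_i)$, and substituting into $c(\mathcal{H}) = \sigma(\mathcal{H}) - n + 1$ yields the clean identity $c(\mathcal{H}) = \sum_{i\geq 2}(r_i - 1)$. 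Since $c(\mathcal{H})$ depends only on $\sigma(\mathcal{H})$, $n$ and the number of components, this total excess is the same for every connected ordering; in particular $c(\mathcal{H}) = 1$ means that, in any connected ordering, exactly one edge attaches to its predecessors at two vertices while every other edge attaches at a single vertex (the case $c(\mathcal{H}) = 0$ being precisely the supertrees).

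For sufficiency I would fix a connected ordering whose last edge $e_0$ is the unique edge of excess two. Then $e_1, \ldots, e_{m-1}$ build a connected acyclic hypergraph $\mathcal{T}$ --- a supertree --- covering every vertex except the degree-one (private) vertices of $e_0$, and $e_0$ meets $\mathcal{T}$ in exactly two vertices $x$ and $y$. The unique $x$-$y$ path in $\mathcal{T}$ together with $e_0$ is a cycle. To show it is the only one, I would argue that any cycle must use $e_0$, since otherwise it would lie inside the acyclic $\mathcal{T}$; that the private vertices of $e_0$, having degree one, cannot lie on any cycle; and hence that any cycle meets $e_0$ in two vertices of $\mathcal{T}$, which can only be $x$ and $y$. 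Uniqueness of the $x$-$y$ path in $\mathcal{T}$ then forces every cycle to coincide with the one exhibited.

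For necessity I would proceed contrapositively with the same identity: a hypergraph possessing a cycle is not a supertree, so $c(\mathcal{H}) \geq 1$, and it remains to rule out $c(\mathcal{H}) \geq 2$. If the total excess is at least two, a connected ordering displays either two edges each attaching at two earlier vertices or a single edge attaching at three earlier vertices; building a supertree first and then inserting these edges produces, in either case, two distinct cycles (distinguished by whether they use the very last edge, or by which pair of attachment vertices they join), contradicting uniqueness. Thus a unique cycle forces $c(\mathcal{H}) = 1$, i.e.\ $\sigma(\mathcal{H}) = n$. I expect the genuine difficulty to be the hypergraph bookkeeping rather than the arithmetic: rigorously deducing, from the walk definition of a cycle, that every cycle traverses the excess edge, that private vertices never lie on a cycle, and that the cycles produced when $c(\mathcal{H}) \geq 2$ are truly distinct as subhypergraphs, all require care because in the non-linear setting two edges may share several vertices.
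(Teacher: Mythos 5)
Your reduction of the statement to the cyclomatic number, and the identity $c(\mathcal{H})=\sum_{i\geq 2}(r_i-1)$ over a connected edge ordering, are both correct (the paper gives no proof of its own here, citing \cite{Berge1}, where the classical argument instead passes through the vertex--edge incidence bipartite graph). However, your sufficiency argument has a genuine gap at its very first step: you ``fix a connected ordering whose last edge $e_0$ is the unique edge of excess two,'' and such an ordering need not exist. Which edge carries the excess two is not an invariant of $\mathcal{H}$ --- it depends on the ordering --- and it can happen that no excess-two edge can be placed last. Concretely, take the $3$-uniform hypergraph with edges $e_1=\{a,b,x_1\}$, $e_2=\{b,c,x_2\}$, $e_3=\{c,a,x_3\}$, $f_1=\{x_1,p_1,q_1\}$, $f_2=\{x_2,p_2,q_2\}$, $f_3=\{x_3,p_3,q_3\}$. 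Here $n=12$ and $\sum_{i}(|e_i|-1)=12$, so $c(\mathcal{H})=1$, and the unique cycle is $a\,e_1\,b\,e_2\,c\,e_3\,a$. The last edge of any connected ordering must be an edge whose deletion leaves the other five edges connected; but deleting any $e_i$ strands $f_i$ (the vertex $x_i$ lies only in $e_i$ and $f_i$), so the last edge must be some $f_i$, which attaches at the single vertex $x_i$ and hence has excess one. Thus no connected ordering of this hypergraph ends with the excess-two edge, and your decomposition into a spanning supertree $\mathcal{T}$ plus one closing edge is unavailable. Your necessity direction leans on the same illegal reordering (``building a supertree first and then inserting these edges'').

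The argument can be repaired within your framework by refusing to reorder: induct on $m$, peeling off the last edge $e_m$ of an arbitrary connected ordering. If $r_m=1$, then every vertex of $e_m$ other than its attachment vertex has degree one (those vertices are new and no edge follows), so no cycle passes through $e_m$ --- a cycle traversing $e_m$ would need two distinct vertices of $e_m$ each lying in another edge --- and therefore the cycles of $\mathcal{H}$ are exactly the cycles of the prefix $\{e_1,\ldots,e_{m-1}\}$, which is connected with the same total excess; induction applies. If $r_m=2$, the prefix is a supertree and your argument (unique $x$--$y$ path, private vertices off all cycles) goes through verbatim as the base case. The same viewpoint fixes necessity: in any connected ordering, the first excess event closes a cycle inside a prefix, and any later excess event (a second edge of excess $\geq 2$, or a single edge of excess $\geq 3$, which closes two cycles flanked by different vertex pairs) produces a cycle using that later edge, hence distinct from the first; so total excess $\geq 2$ forces at least two cycles. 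Alternatively, the shortest complete route is the classical one: cycles of $\mathcal{H}$ correspond bijectively to cycles of the incidence bipartite graph $B$, and $|E(B)|-|V(B)|+1=\sum_i(|e_i|-1)-n+1$, so the proposition reduces to the standard graph fact that a connected graph has a unique cycle if and only if it has equally many edges and vertices.
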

A connected hypergraph $\mathcal{H}$ is unicyclic if and only if $c(\mathcal{H})=1$.
The following result follows from Definition \ref{Def:Hunic} and Proposition \ref{Prop:0} immediately.

\begin{proposition}
%[See \cite{Berge1}]
\label{Prop:2}
A connected $k$-uniform hypergraph $\mathcal{H}$ with $n$ vertices and $m$ edges is a unicyclic $k$-uniform hypergraph if and only if $m(k-1)=n$.
\end{proposition}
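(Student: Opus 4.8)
The plan is to reduce the statement to Proposition~\ref{Prop:0} by exploiting uniformity. First I would observe that since $\mathcal{H}$ is $k$-uniform, every edge satisfies $|e_i|=k$, so the quantity appearing in Proposition~\ref{Prop:0} collapses to a single product:
\[
\sum_{i=1}^{m}(|e_i|-1)=\sum_{i=1}^{m}(k-1)=m(k-1).
\]
This is the only place where the $k$-uniform hypothesis is used, and it converts Berge's general criterion into the desired clean form.

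Next I would recall from Definition~\ref{Def:Hunic} that a unicyclic hypergraph is precisely a connected hypergraph containing exactly one cycle, i.e.\ one having a unique cycle. Since $\mathcal{H}$ is assumed connected, Proposition~\ref{Prop:0} applies and asserts that $\mathcal{H}$ has a unique cycle if and only if $\sum_{i=1}^{m}(|e_i|-1)=n$. Substituting the identity from the first step, this condition becomes exactly $m(k-1)=n$, which delivers both directions of the equivalence simultaneously. As the excerpt itself notes, the result then follows immediately.

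As an alternative route I would argue through the cyclomatic number of Definition~\ref{Def:6}: since $\mathcal{H}$ is connected we have $l=1$, so $c(\mathcal{H})=m(k-1)-n+1$, and combining this with the stated equivalence that a connected hypergraph is unicyclic exactly when $c(\mathcal{H})=1$ gives $m(k-1)-n+1=1$, i.e.\ $m(k-1)=n$. There is essentially no obstacle here; the only point requiring a word of care is confirming that ``contains exactly one cycle'' in Definition~\ref{Def:Hunic} coincides with ``has a unique cycle'' in Proposition~\ref{Prop:0}, which is immediate from the definitions.
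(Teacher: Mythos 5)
Your proposal is correct and follows exactly the paper's route: the paper derives Proposition~\ref{Prop:2} immediately from Definition~\ref{Def:Hunic} together with Berge's criterion (Proposition~\ref{Prop:0}), which is precisely your main argument with the substitution $\sum_{i=1}^{m}(|e_i|-1)=m(k-1)$ made explicit. Your alternative via the cyclomatic number is also consistent with the paper's remark that a connected hypergraph is unicyclic if and only if $c(\mathcal{H})=1$, so nothing further is needed.
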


\begin{definition}[See \cite{Fan}]\label{Def:Hbicyc}
A connected hypergraph $\mathcal{H}$ is called \emph{bicyclic} if $c(\mathcal{H})=2$  (see Figure \ref{EHypers} (c) and (d) for examples).
 \end{definition}

The following result follows from Definitions \ref{Def:6} and \ref{Def:Hbicyc} immediately.

\begin{proposition}\label{Prop:3}
 A connected $k$-uniform hypergraph $\mathcal{H}$ with $n$ vertices and $m$ edges is a bicyclic $k$-uniform hypergraph if and only if $m(k-1)=n+1$.
\end{proposition}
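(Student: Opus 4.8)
The plan is to read the stated edge-count characterization off directly from the definition of the cyclomatic number together with the definition of a bicyclic hypergraph, exactly as was done for the acyclic and unicyclic cases in Propositions \ref{Prop:1} and \ref{Prop:2}. Since each ingredient is an equivalence, both directions of the ``if and only if'' will come out at once, so I would not split the argument into a forward and a backward implication.

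First I would record that, because $\mathcal{H}$ is assumed connected, its number of connected components is $l = 1$. Substituting this value into the cyclomatic number formula of Definition \ref{Def:6} gives the simplified expression $c(\mathcal{H}) = m(k-1) - n + 1$. Next, by Definition \ref{Def:Hbicyc}, the hypergraph $\mathcal{H}$ is bicyclic precisely when $c(\mathcal{H}) = 2$. Combining this with the expression just obtained, $\mathcal{H}$ is bicyclic if and only if $m(k-1) - n + 1 = 2$, and this last equation rearranges immediately to $m(k-1) = n + 1$, which is the desired characterization.

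There is no substantive obstacle here: the result is an immediate algebraic consequence of the two definitions, as the excerpt itself indicates. The only point requiring any care is the bookkeeping of the component count, which must be fixed at $l = 1$ by the connectedness hypothesis before the chain of equivalences $c(\mathcal{H}) = 2 \iff m(k-1)-n+1 = 2 \iff m(k-1) = n+1$ can be written down. I would therefore state the connectedness reduction explicitly at the start and then present the equivalences in a single line.
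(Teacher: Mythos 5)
Your proof is correct and follows exactly the route the paper intends: the paper states that Proposition \ref{Prop:3} ``follows from Definitions \ref{Def:6} and \ref{Def:Hbicyc} immediately,'' which is precisely your argument of setting $l=1$ by connectedness, substituting into $c(\mathcal{H})=m(k-1)-n+l$, and equating $c(\mathcal{H})=2$. Nothing is missing, and your explicit attention to the component count $l=1$ is the only step the paper leaves implicit.
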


%Figure \ref{EHypers} depicts examples of a hyperstar, nonlinear unicyclic and bicyclic hypergraphs.

 \begin{frame}{}
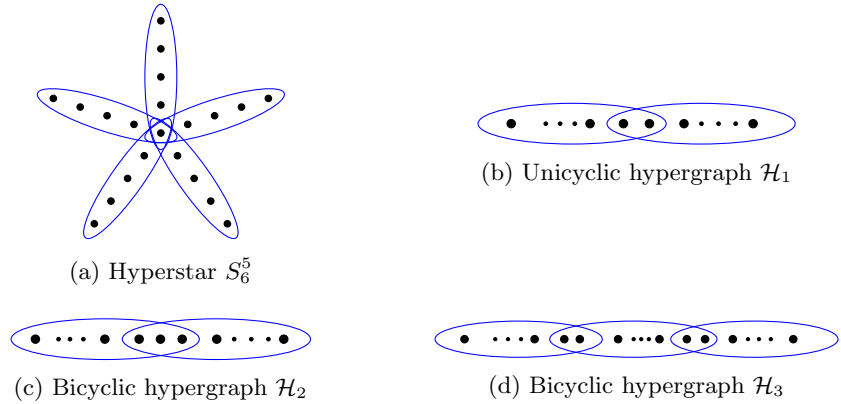
\begin{figure}[htbp]
\vspace{-0.45cm}
\centering
 \subfloat[Hyperstar $S^{5}_6$]{
\label{DirSample_frac01}
  \begin{minipage}[c]{0.4\textwidth}
\centering
\begin{tikzpicture}[scale=1.75]
\draw[rotate around={55:(0,0)},blue] (0,0) ellipse (0.55 and 0.12);
\filldraw [black] (0,0) circle (0.7pt)
(0.5,0)circle (0.7pt)
(0.654,0.475)circle (0.7pt)
(0.250,0.769)circle (0.7pt)
(-0.154,0.475)circle (0.7pt)
(0.25000 ,   0.34380)circle (0.7pt)
( -0.2500,   -0.3438)circle (0.7pt)
 (0.7500 ,  -0.3438)circle (0.7pt)
  ( 1.0580,    0.6062)circle (0.7pt)
    (0.2500 ,   1.1942)circle (0.7pt)
    (-0.5580,    0.6062)circle (0.7pt)
    %sec
    (0.1250,    0.1719)circle (0.7pt)
    (0.3750,    0.1719)circle (0.7pt)
    (0.4520,    0.4094)circle (0.7pt)
    (0.2500,    0.5564)circle (0.7pt)
    (0.0480,    0.4094)circle (0.7pt)
    %four
    (-0.1250,   -0.1719)circle (0.7pt)
    (0.6250,   -0.1719)circle (0.7pt)
    (0.8560,    0.5406)circle (0.7pt)
    (0.2500,    0.9816)circle (0.7pt)
   (-0.3560,    0.5406)circle (0.7pt);
\draw[rotate around={125:(0.5,0)},blue] (0.5,0) ellipse (0.55 and 0.12);
\draw[rotate around={18:(0.654,0.475)},blue] (0.654,0.475) ellipse (0.55 and 0.12);
\draw[rotate around={90:(0.250,0.769)},blue] (0.250,0.769) ellipse (0.55 and 0.12);
\draw[rotate around={-18:(-0.154,0.475)},blue] (-0.154,0.475) ellipse (0.55 and 0.12);
\end{tikzpicture}
\end{minipage}
}
\subfloat[Unicyclic hypergraph $\mathcal{H}_1$]{
\label{DirSample_frac02}
  \begin{minipage}[c]{0.40\textwidth}
\centering
\begin{tikzpicture}[scale=2.25]
\filldraw [black]
(0.654,0.475)circle (0.7pt)
(0.3000 ,0.475)circle (0.7pt)
  ( 1.0580,   0.475)circle (0.7pt)
    (0.4520,    0.475)circle (0.7pt)
    (0.8560,    0.475)circle (0.3pt)
    (0.7560,    0.475)circle (0.3pt)
    (0.9560,    0.475)circle (0.3pt)
    (0.01560,    0.475)circle (0.3pt)
    (-0.070,    0.475)circle (0.3pt)
   (-0.1540,    0.475)circle (0.3pt)
   (0.10480 ,   0.475)circle (0.7pt)
   (-0.3560 ,   0.475)circle (0.7pt);
\draw[rotate around={0:(0.754,0.475)},blue] (0.754,0.475) ellipse (0.55 and 0.12);
\draw[rotate around={0:(0.00,0.475)},blue] (0.00,0.475) ellipse (0.55 and 0.12);
\end{tikzpicture}
\end{minipage}
}

\subfloat[Bicyclic hypergraph $\mathcal{H}_2$]{
\label{DirSample_frac02}
  \begin{minipage}[c]{0.40\textwidth}
\centering
\begin{tikzpicture}[scale=2.25]
\filldraw [black]
(0.604,0.475)circle (0.7pt)
(0.275000 ,0.475)circle (0.7pt)
  ( 0.99580,   0.475)circle (0.7pt)
    (0.4020,    0.475)circle (0.7pt)
    (0.8060,    0.475)circle (0.3pt)
    (0.7060,    0.475)circle (0.3pt)
    (0.9060,    0.475)circle (0.3pt)
    (-0.050,    0.475)circle (0.7pt)
    (-0.32070,    0.475)circle (0.3pt)
   (-0.17540,    0.475)circle (0.3pt)
   (-0.2540,    0.475)circle (0.3pt)
   (0.150480 ,   0.475)circle (0.7pt)
   (-0.4560 ,   0.475)circle (0.7pt);
\draw[rotate around={0:(0.6020,0.475)},blue] (0.6020,0.475) ellipse (0.55 and 0.12);
\draw[rotate around={0:(-0.0480,0.475)},blue] (-0.0480,0.475) ellipse (0.55 and 0.12);
\end{tikzpicture}
\end{minipage}
}
\subfloat[Bicyclic hypergraph $\mathcal{H}_3$]{
\label{DirSample_frac02}
  \begin{minipage}[c]{0.40\textwidth}
\centering
\begin{tikzpicture}[scale=2.0]
\filldraw [black]
(0.654,0.475)circle (0.7pt)
(0.3000 ,0.475)circle (0.7pt)
  ( 1.10580,   0.475)circle (0.7pt)
  ( 1.22580,   0.475)circle (0.7pt)
  ( 1.4080,   0.475)circle (0.7pt)
  ( 1.5080,   0.475)circle (0.3pt)
  ( 1.580,   0.475)circle (0.3pt)
  ( 1.6580,   0.475)circle (0.3pt)
  ( 1.80580,   0.475)circle (0.7pt)
    (0.4020,    0.475)circle (0.7pt)
    (0.8560,    0.475)circle (0.3pt)
    (0.8060,    0.475)circle (0.3pt)
    (0.7560,    0.475)circle (0.3pt)
    (0.9260,    0.475)circle (0.7pt)
    (0.01560,    0.475)circle (0.3pt)
    (-0.070,    0.475)circle (0.3pt)
   (-0.1540,    0.475)circle (0.3pt)
   (0.10480 ,   0.475)circle (0.7pt)
   (-0.3560 ,   0.475)circle (0.7pt);
\draw[rotate around={0:(0.754,0.475)},blue] (0.754,0.475) ellipse (0.55 and 0.12);
\draw[rotate around={0:(0.00,0.475)},blue] (0.00,0.475) ellipse (0.55 and 0.12);
\draw[rotate around={0:( 1.550,0.475)},blue] ( 1.55,0.475) ellipse (0.55 and 0.12);
\end{tikzpicture}
\end{minipage}
}
\caption{Examples on hyperstars, non-linear unicyclic or bicyclic hypergraphs}\label{EHypers}
\end{figure}
\end{frame}

The paper is structured as follows. In Section 2, we give some definitions and basic results. In Sections 3-5, we obtain upper and lower bounds of $I_d^t(\mathcal{H})$ for $t=1$, when $\mathcal{H}$ is among all uniform supertrees, unicyclic uniform hypergraphs and bicyclic uniform hypergraphs, respectively.
\section{Preliminaries}
In this section, we give some definitions and basic results which will be used later.
Let $\mathcal{H} = (V_\mathcal{H}, E_\mathcal{H})$ be a $k$-uniform hypergraph with $n$ vertices and $m$ edges. Define the non-increasing \emph{degree sequence} of $\mathcal{H}$ by $\pi(\mathcal{H})=(d_1,d_2,\ldots,d_n)$, that is, $d_1 \geq d_2 \geq \cdots\geq d_n$. Note that
\[\sum_{i=1}^{n}d_i=km.\]
By Definition \ref{Def:2}, we have
\begin{equation}\label{Equ:3.1}
I_d^1(\mathcal{H})=
\log\left(\sum_{i=1}^{n}d_i\right)-\sum_{i=1}^{n}\left(\frac{d_i}{\sum_{j=1}^{n}d_j}\log d_i\right) =\log(km)-\frac{1}{km}\sum_{i=1}^{n}(d_i\log d_i).
\end{equation}
Thus, for a class of $k$-uniform hypergraphs with given number of edges, in order to determine the extremal values of $I_d^1(\mathcal{H})$, we just need to determine the extremal
values of $\sum\limits_{i=1}^{n}(d_i\log d_i)$.  Then we define
$$
h(\mathcal{H}):=\sum_{i=1}^{n}(d_i\log d_i).
$$
In the following, we first study some properties of $h(\mathcal{H})$.

%Let $\pi=(d_1,\ldots,d_n)$ be a non-increasing sequence of nonnegative integers,
% we write $\pi= (d_1^{m_1},\ldots, d_n^{m_n})$
%, where exponents denote multiplicity.
%To start we state some notation. Denote by $\pi(\mathcal{H})=(d_1,d_2,\ldots,d_n)$ the degree sequence of $\mathcal{H}$, and

Li, Shao and Qi \cite[Definition 14]{Li} introduced the operation of \emph{moving edges} on hypergraphs, which is stated as follows.

\begin{definition}[See \cite{Li}]\label{Def:7}
Let $r \geq 1$ and $\mathcal{H}= (V_\mathcal{H}, E_\mathcal{H})$ be a hypergraph with $u \in V_\mathcal{H}$ and $e_1, \ldots , e_r \in E_\mathcal{H}$ such that $u\notin e_i$ for $i= 1,\ldots,r$.
Suppose that $v_i\in e_i$ and write
$e_i'=(e_i\setminus\{v_i\})\cup \{u\}~(i=1,\ldots,r)$.
Let $\mathcal{H}'=(V_\mathcal{H}, E_{\mathcal{H}'})$ be the hypergraph with
$E_{\mathcal{H}'}=(E_{\mathcal{H}}\setminus\{e_1,\ldots,e_r\})\cup\{e_1',\ldots,e_r'\}$.
Then we say that $\mathcal{H}'$ is obtained from $\mathcal{H}$ by moving edges $(e_1, \ldots , e_r)$ from $(v_1,\ldots,v_r)$ to $u$.
\end{definition}

\begin{remark}[See \cite{Li}]
{\em
(a) The vertices $v_1,\ldots, v_r$ need not be distinct.

(b) The new hypergraph $\mathcal{H}'$ may contain multiple edges. But if $\mathcal{H}$ is acyclic and there is an edge $e\in E$ containing all the vertices $u, v_1,\ldots,v_r$,
then $\mathcal{H}'$ contains no multiple edges.}
\end{remark}
Let $\mathcal{H}=(V_\mathcal{H}, E_\mathcal{H})$ be a hypergraph with the non-increasing degree sequence $\pi(\mathcal{H})=(d_1,d_2,\ldots,d_n)$. If there exist two vertices $v_i$ and $v_j$ such that $d_i \geq d_j + 2$, where $v_i\in e\in E_\mathcal{H}$ and $v_j\notin e$, then we define a new hypergraph $\mathcal{H}'$, which is obtained from $\mathcal{H}$ by the operation of moving edge $e$ from $v_i$ to $v_j$ (see Definition \ref{Def:7}).
 Thus $\pi(\mathcal{H}')=(d_1, d_2,\ldots, d_{i-1}, d_i - 1, d_{i+1}, \ldots, d_{j-1}, d_j + 1, d_{j+1},\ldots, d_n)$. By modifying the proof of  \cite[Lemma 1]{Cao}, we obtain the following result.

\begin{lemma}\label{Lem:1}
Let $\mathcal{H}$ and $\mathcal{H}'$ be two hypergraphs specified as above. Then $h(\mathcal{H})>h(\mathcal{H}')$.
\end{lemma}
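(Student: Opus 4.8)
The plan is to reduce the inequality to a one-dimensional convexity statement. Since $\mathcal{H}$ and $\mathcal{H}'$ have degree sequences that agree in every coordinate except the $i$-th and the $j$-th, writing $g(x):=x\log x$ all the terms in $h(\mathcal{H})-h(\mathcal{H}')$ cancel except four, so
$$
h(\mathcal{H})-h(\mathcal{H}')=\big[g(d_i)+g(d_j)\big]-\big[g(d_i-1)+g(d_j+1)\big],
$$
and it suffices to show that this quantity is strictly positive under the hypothesis $d_i\ge d_j+2$. In other words, the whole content of the lemma is that the functional $h$, being a sum of copies of one convex function, decreases strictly under the ``Robin Hood'' transfer of a single unit of degree from the larger value $d_i$ to the smaller value $d_j$.

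Next I would record the elementary analytic facts about $g$. On $(0,\infty)$ the base-two logarithm gives $g'(x)=\log x+\tfrac{1}{\ln 2}$ and $g''(x)=\tfrac{1}{x\ln 2}>0$, so $g$ is strictly convex and $g'$ is strictly increasing. Because the hypergraphs of interest are connected, every vertex has degree at least one, and together with $d_i\ge d_j+2$ this makes all four arguments $d_i,\,d_i-1,\,d_j,\,d_j+1$ positive integers, so there is no difficulty with the domain of the logarithm.

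The key step is to rephrase the target as a comparison of consecutive increments of $g$. Setting $\phi(x):=g(x)-g(x-1)$, the desired inequality is exactly $\phi(d_i)>\phi(d_j+1)$. Since $\phi'(x)=g'(x)-g'(x-1)=\log\frac{x}{x-1}>0$ for $x>1$, the function $\phi$ is strictly increasing, and the hypothesis $d_i\ge d_j+2>d_j+1$ yields $\phi(d_i)>\phi(d_j+1)$ immediately. Equivalently, one may argue by the mean value theorem: $g(d_i)-g(d_i-1)=g'(\xi_1)$ with $\xi_1\in(d_i-1,d_i)$ and $g(d_j+1)-g(d_j)=g'(\xi_2)$ with $\xi_2\in(d_j,d_j+1)$; since $d_i-1\ge d_j+1$ forces $\xi_1>\xi_2$, the strict monotonicity of $g'$ gives $g'(\xi_1)>g'(\xi_2)$, which is precisely the claim.

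I do not expect a serious obstacle, since this is the graph-case computation of \cite[Lemma 1]{Cao} transplanted verbatim, with $g(x)=x\log x$ in place of its analogue. The only points that genuinely require care---and the reason the hypothesis is $d_i\ge d_j+2$ rather than merely $d_i>d_j$---are to guarantee the strict separation $\xi_1>\xi_2$ of the mean-value points (equivalently, the strict gap needed so that the increasing function $\phi$ is evaluated at two distinct points) and to keep every argument of the logarithm strictly positive; both are secured by the degree bounds noted above.
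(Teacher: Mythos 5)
Your proof is correct and takes essentially the same approach as the paper: the same four-term reduction of $h(\mathcal{H})-h(\mathcal{H}')$, followed by the mean value theorem applied to $g(x)=x\log x$ with derivative $\log x+\tfrac{1}{\ln 2}$, using $d_i-1\ge d_j+1$ to separate the two mean-value points. Your primary packaging via the increasing increment function $\phi(x)=g(x)-g(x-1)$ is only a cosmetic restatement of that same convexity argument, and your second, ``equivalent'' formulation is verbatim the paper's proof.
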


\begin{proof}
Note that $d_i \geq d_j + 2$, that is, $d_i>d_i-1\geq d_j + 1>d_j$. Then
\begin{eqnarray*}
h(\mathcal{H})- h(\mathcal{H}')
&=&d_i \log d_i +d_j \log d_j - (d_i - 1) \log (d_i - 1)-(d_j + 1) \log (d_j + 1)\\
&=&\Big(d_i \log d_i-(d_i - 1) \log (d_i - 1)\Big)-\Big((d_j + 1) \log (d_j + 1)-d_j \log d_j\Big)\\
&=&\left(\log\xi_{1}+\frac{1}{\ln2}\right)-\left(\log\xi_{2}+\frac{1}{\ln2}\right)\\
&>&0,
\end{eqnarray*}
where $\xi_{1}\in (d_i-1,d_i)$ and $\xi_{2}\in (d_j,d_j+1)$. This completes the proof.\qed
\end{proof}

The following \emph{edge-releasing }operation on linear hypergraphs is a special case of
the edge moving operation defined in Definition \ref{Def:7}.

\begin{definition}[See \cite{Li}]\label{Def:8}
Let $\mathcal{H}$ be a $k$-uniform linear hypergraph, $e$ be a non-pendent edge of $\mathcal{H}$
and $u\in e$. Let $e_1,e_2,\ldots,e_r$ be all the edges of $\mathcal{H}$ adjacent to $e$ but not containing $u$, and suppose that $e_i\cap e=\{v_i\}$ for $i=1,\ldots,r$. Let $\mathcal{H}'$ be the hypergraph obtained
from $\mathcal{H}$ by moving edges $(e_1,e_2,\ldots,e_r)$ from $(v_1,\ldots,v_r)$ to $u$. Then $\mathcal{H}'$ is said to be
obtained from $\mathcal{H}$ by an edge-releasing operation on $e$ at $u$.
\end{definition}

\begin{remark}[See \cite{Li}]\label{Rem:2}
{\em (a) In Definition \ref{Def:8}, the vertices $v_1,\ldots, v_r$ need not be distinct.

(b) If $\mathcal{H}$ is acyclic,
then $\mathcal{H}'$ contains no multiple edges.

(c) If $\mathcal{H}'$ and $\mathcal{H}''$ are two hypergraphs obtained
from a $k$-uniform linear hypergraph $\mathcal{H}$ by an edge-releasing operation on some edge
$e$ at $u$ and at $v$, respectively, then $\mathcal{H}'$ and $\mathcal{H}''$ are isomorphic.
}
\end{remark}

\begin{proposition}[See \cite{Li}]\label{Prop:4}
Let $\mathcal{H}'$ be a hypergraph obtained from a $k$-uniform supertree $\mathcal{H}$ by an edge-releasing operation on a non-pendent edge $e$ of $\mathcal{H}$. Then $\mathcal{H}'$ is also a supertree.
\end{proposition}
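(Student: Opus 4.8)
The plan is to lean on the characterization in Proposition \ref{Prop:1}: since a supertree is by Definition \ref{Def:5} exactly a connected acyclic hypergraph, it suffices to verify that $\mathcal{H}'$ is a simple connected $k$-uniform hypergraph with the same vertex count $n$ and edge count $m$ as $\mathcal{H}$; the identity $m(k-1)=n-1$ inherited from $\mathcal{H}$ then forces $\mathcal{H}'$ to be acyclic, hence a supertree. Thus the argument splits into several routine checks (parameters, linearity so that the operation is even defined, simplicity) together with one genuine step, connectivity.

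First I would record the easy facts. A supertree is linear: if two distinct edges $e,e'$ shared two vertices $a,b$, then $a\,e\,b\,e'\,a$ would be a cycle of length two, contradicting acyclicity; hence the edge-releasing operation of Definition \ref{Def:8} is well defined on $\mathcal{H}$, and in particular $e\cap e_i=\{v_i\}$ is a single vertex with $v_i\neq u$ (since $u\notin e_i$). Being a special case of the edge-moving operation of Definition \ref{Def:7}, the operation deletes the $r$ edges $e_1,\dots,e_r$ and inserts $e_1',\dots,e_r'$ on the same vertex set, so $\mathcal{H}'$ again has $n$ vertices and $m$ edges and $m(k-1)=n-1$ persists. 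Because $\mathcal{H}$ is acyclic, Remark \ref{Rem:2}(b) guarantees that $\mathcal{H}'$ has no multiple edges, so $\mathcal{H}'$ is simple.

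The substantive step is connectivity, which I would establish by showing that every walk of $\mathcal{H}$ can be rerouted inside $\mathcal{H}'$. The edges left untouched by the operation (including $e$ itself) are still present in $\mathcal{H}'$, so it suffices to handle a single traversal along a moved edge $e_i$ from a vertex $a$ to a vertex $b$. Recall that $e_i'=(e_i\setminus\{v_i\})\cup\{u\}$, so every vertex of $e_i$ other than $v_i$ still lies in $e_i'$, while both $v_i$ and $u$ lie in the unchanged edge $e$. If neither $a$ nor $b$ equals $v_i$, then $a,b\in e_i'$ and the step survives verbatim. If, say, $a=v_i$, I replace the step by the detour $v_i\,e\,u\,e_i'\,b$, which is a legitimate walk in $\mathcal{H}'$ since $\{v_i,u\}\subseteq e$ and $\{u,b\}\subseteq e_i'$. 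Applying this substitution along a walk between any two prescribed vertices of $\mathcal{H}$ produces a walk between them in $\mathcal{H}'$, so $\mathcal{H}'$ is connected.

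Combining the two parts, $\mathcal{H}'$ is a connected simple $k$-uniform hypergraph satisfying $m(k-1)=n-1$, so Proposition \ref{Prop:1} renders it acyclic, i.e.\ a supertree. I expect the only real obstacle to be the connectivity argument, namely being careful that the rerouting yields a valid walk in every case, including when a walk enters or leaves a moved edge through the deleted vertex $v_i$; the detour through $e$ at the common vertex $u$ is precisely the structure that the edge-releasing operation preserves.
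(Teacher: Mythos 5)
Your proposal is correct, but there is nothing in the paper to compare it against: Proposition \ref{Prop:4} is stated with the attribution ``See \cite{Li}'' and the paper gives no proof of it at all, deferring entirely to that reference. Your argument is a legitimate self-contained substitute. The three pillars all hold up: (i) acyclicity of $\mathcal{H}$ rules out two edges sharing two vertices (a $2$-cycle), so $\mathcal{H}$ is linear and the edge-releasing operation of Definition \ref{Def:8} is indeed applicable; (ii) Remark \ref{Rem:2}(b) guarantees $\mathcal{H}'$ is simple, which is what makes the edge count genuinely remain $m$ (without it the replaced edges $e_1',\ldots,e_r'$ could collide with each other or with surviving edges) and what licenses the application of Proposition \ref{Prop:1}, which is stated for the paper's standing class of simple hypergraphs; (iii) the rerouting $v_i\,e\,u\,e_i'\,b$ is valid because $e$ itself is never among the moved edges (it contains $u$, they do not), so $e$ survives into $\mathcal{H}'$ and covers exactly the problematic entries and exits through the deleted vertex $v_i$. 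With connectivity and $m(k-1)=n-1$ in hand, Proposition \ref{Prop:1} forces acyclicity, and Definition \ref{Def:5} closes the loop. One stylistic remark: the counting step uses both directions of Proposition \ref{Prop:1} (the ``only if'' direction on $\mathcal{H}$ to extract $m(k-1)=n-1$, the ``if'' direction on $\mathcal{H}'$ to recover acyclicity); it would be worth saying this explicitly, but it is not a gap.
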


 \begin{proposition}\label{Prop:5}
Let $\mathcal{H}'$ be a hypergraph obtained from a unicyclic $k$-uniform hypergraph $\mathcal{H}$ by the operation of moving edge. If $\mathcal{H}'$ is connected, then $\mathcal{H}'$ is also a unicyclic $k$-uniform hypergraph.
\end{proposition}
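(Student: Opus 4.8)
The plan is to reduce the statement to the edge--vertex count characterization of unicyclicity. Recall that by Definition~\ref{Def:6} a connected hypergraph is unicyclic exactly when its cyclomatic number equals $1$, and that by Proposition~\ref{Prop:2} a connected $k$-uniform hypergraph with $n$ vertices and $m$ edges is unicyclic if and only if $m(k-1)=n$. So the whole argument comes down to checking that the move changes neither the number of vertices $n$, nor the number of edges $m$, nor the uniformity $k$, and that $\mathcal{H}'$ is genuinely simple; then the hypothesis that $\mathcal{H}'$ is connected finishes the job at once.

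First I would record the invariants of the single-edge move. Writing $e'=(e\setminus\{v_i\})\cup\{v_j\}$ with $v_i\in e$ and $v_j\notin e$, we have $|e'|=|e|=k$, so $\mathcal{H}'$ is again $k$-uniform; the vertex set is unchanged by Definition~\ref{Def:7}, so $\mathcal{H}'$ has the same $n$ vertices; and the edge set is obtained by deleting $e$ and inserting the single edge $e'$, so provided $e'\notin E_\mathcal{H}\setminus\{e\}$ the number of edges is still $m$. Since $\mathcal{H}$ is connected and unicyclic, Proposition~\ref{Prop:2} gives $m(k-1)=n$; hence, if $\mathcal{H}'$ is connected, so that its number of components is $l'=1$, its cyclomatic number is $c(\mathcal{H}')=m(k-1)-n+1=1$, and Proposition~\ref{Prop:2} applied to $\mathcal{H}'$ yields that $\mathcal{H}'$ is unicyclic.

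The one point that needs care, and which I expect to be the main obstacle, is verifying that the move creates no multiple edge, i.e.\ that $e'\notin E_\mathcal{H}\setminus\{e\}$, so that $\mathcal{H}'$ really is simple with exactly $m$ edges. Suppose to the contrary that $e'=f$ for some $f\in E_\mathcal{H}$ with $f\neq e$. Then $e$ and $f$ agree on the $k-1$ vertices of $e\setminus\{v_i\}$ and differ only in $v_i$ versus $v_j$, so after the move the merged edge is counted once and the simple hypergraph $\mathcal{H}'$ has exactly $m-1$ edges on the unchanged vertex set of size $n$. Using $m(k-1)=n$ and $k\ge 3$ we then get $(m-1)(k-1)=n-(k-1)\le n-2$, whereas every connected $k$-uniform hypergraph on $n$ vertices must satisfy (number of edges)$\cdot(k-1)\ge n-1$ (equivalently, its cyclomatic number is nonnegative). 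Thus such an $\mathcal{H}'$ cannot be connected, contradicting the hypothesis. This contradiction shows that $e'$ is a new edge, whence $m'=m$ and the count argument of the previous paragraph applies, completing the proof.
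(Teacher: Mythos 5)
Your proof is correct and follows the same core route as the paper's: the moving-edge operation fixes the vertex set, the uniformity $k$, and the number of edges, so the relation $m(k-1)=n$ persists, and Proposition~\ref{Prop:2} applied to the connected hypergraph $\mathcal{H}'$ immediately gives unicyclicity. The genuine difference is your final paragraph. The paper simply asserts that $|E_{\mathcal{H}'}|=|E_{\mathcal{H}}|$, even though the remark following Definition~\ref{Def:7} explicitly warns that edge-moving may create multiple edges, in which case the resulting simple hypergraph would have fewer than $m$ edges and the counting argument would not apply as written. You close this gap: if $e'$ coincided with an existing edge, then $\mathcal{H}'$ would have $m-1$ edges with $(m-1)(k-1)=n-(k-1)\le n-2$, violating the inequality $m'(k-1)\ge n-1$ that every connected $k$-uniform hypergraph on $n$ vertices must satisfy (nonnegativity of the cyclomatic number --- a standard fact you invoke without proof, and which the paper never states explicitly, but which follows by assembling the hypergraph one edge at a time, each new edge contributing at most $k-1$ new vertices). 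So the connectivity hypothesis itself rules out edge collisions, making your proof strictly more careful than the paper's. One small point: Definition~\ref{Def:7} permits moving several edges $(e_1,\ldots,e_r)$ at once, and the paper later invokes this proposition after moving two edges simultaneously; your single-edge computation extends verbatim, since each collision drops the edge count by one while lowering the left-hand side by at least $k-1\ge2$, but it would be worth saying so.
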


\begin{proof}
By the definition of the operation of moving edge, we can see that $\mathcal{H}$ and $\mathcal{H}'$ the same number of edges. Thus, by Proposition \ref{Prop:0} we have
$\mid E_{\mathcal{H}}\mid=\mid E_{\mathcal{H}'}\mid=\frac{n}{k-1}$. Note that $\mathcal{H}'$ is connected. Hence, by Proposition \ref{Prop:2}, we conclude that $\mathcal{H}'$ is also a unicyclic $k$-uniform hypergraph. \qed
\end{proof}

\begin{proposition}\label{Prop:6}
Let $\mathcal{H}'$ be a hypergraph obtained from a bicyclic $k$-uniform hypergraph $\mathcal{H}$ by the operation of moving edge. If $\mathcal{H}'$ is connected, then $\mathcal{H}'$ is also a bicyclic $k$-uniform hypergraph.
\end{proposition}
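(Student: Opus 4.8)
The plan is to mirror the argument given for the unicyclic case in Proposition~\ref{Prop:5}, with the bicyclic characterization of Proposition~\ref{Prop:3} playing the role of the unicyclic characterization of Proposition~\ref{Prop:2}. The entire proof rests on the fact that the edge-moving operation changes neither the number of vertices nor the number of edges, so that the arithmetic identity characterizing bicyclicity is inherited by $\mathcal{H}'$ as soon as connectivity is granted.

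First I would record the two conservation facts. By Definition~\ref{Def:7}, the hypergraph $\mathcal{H}'$ is built on the same vertex set $V_\mathcal{H}$ as $\mathcal{H}$, and its edge set is $E_{\mathcal{H}'}=(E_\mathcal{H}\setminus\{e_1,\ldots,e_r\})\cup\{e_1',\ldots,e_r'\}$, so that exactly $r$ edges are deleted and $r$ edges are inserted. Consequently $\mathcal{H}$ and $\mathcal{H}'$ have the same number of vertices $n$ and the same number of edges $m$.

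Next, since $\mathcal{H}$ is a bicyclic $k$-uniform hypergraph, Proposition~\ref{Prop:3} yields $m(k-1)=n+1$. As $\mathcal{H}'$ has the same parameters $n$ and $m$, the identity $m(k-1)=n+1$ persists for $\mathcal{H}'$. Since $\mathcal{H}'$ is connected by hypothesis, the converse direction of Proposition~\ref{Prop:3}, applied now to $\mathcal{H}'$, shows that $\mathcal{H}'$ is a bicyclic $k$-uniform hypergraph, as required.

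The one point requiring care is the claim that the number of edges is genuinely preserved, that is, that the new edges $e_1',\ldots,e_r'$ do not coincide with one another or duplicate a surviving edge, since in a simple hypergraph such a collision would silently lower $m$. This is precisely the multiplicity concern raised in the remark following Definition~\ref{Def:7}; within the paper's standing assumption that all hypergraphs are simple and that $\mathcal{H}'$ is taken to be simple, the equality $|E_{\mathcal{H}'}|=|E_\mathcal{H}|=m$ is valid, and the rest of the argument is a direct transcription of the proof of Proposition~\ref{Prop:5}.
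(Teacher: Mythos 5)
Your proposal is correct and follows essentially the same route as the paper: both arguments observe that the edge-moving operation preserves the number of vertices and edges, so the identity $m(k-1)=n+1$ survives, and then invoke the characterization of Proposition~\ref{Prop:3} together with the assumed connectivity of $\mathcal{H}'$. Your explicit caveat about possible edge collisions (multiple edges silently lowering $m$) is a point the paper's own proof passes over in silence, so flagging it is a small but genuine improvement in rigor rather than a deviation in method.
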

\begin{proof}
By the definition of the operation of moving edge, we can see that $\mathcal{H}$ and $\mathcal{H}'$ the same number of edges. Thus, by Definitions \ref{Def:6} and \ref{Def:Hbicyc} we have
$\mid E_{\mathcal{H}}\mid=\mid E_{\mathcal{H}'}\mid=\frac{n+1}{k-1}$.  Note that $\mathcal{H}'$ is connected. Hence, by Proposition \ref{Prop:3}, we conclude that $\mathcal{H}'$ is also a bicyclic $k$-uniform hypergraph.
\end{proof}

\begin{lemma}\label{Lem:2}
Let $\mathcal{H}'$ be the hypergraph obtained from a connected $k$-uniform linear hypergraph $\mathcal{H}$ by an edge-releasing operation on $e$ at $u$, where $e$ is a non-pendent edge of $\mathcal{H}$ and $u\in e$. Then $h(\mathcal{H})< h(\mathcal{H}')$.
\end{lemma}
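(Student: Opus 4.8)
The plan is to compute explicitly how the degree sequence changes under the edge-releasing operation and to reduce the statement to a single inequality for the convex function $g(x):=x\log x$. Retain the notation of Definition~\ref{Def:8}: the edges $e_1,\dots,e_r$ meet $e$ in the vertices $v_1,\dots,v_r$, and I would pass to the \emph{distinct} vertices $w_1,\dots,w_p$ among them, letting $m_j$ be the multiplicity of $w_j$ (so $\sum_{j=1}^{p}m_j=r$). Moving the edges to $u$ raises the degree of $u$ from $d_u$ to $d_u+r$ and lowers the degree of each $w_j$ from $d_{w_j}$ to $d_{w_j}-m_j$, leaving all other degrees fixed. The first key step, and the point where linearity is used, is the observation that $d_{w_j}=m_j+1$ for every $j$: since $\mathcal{H}$ is linear and $w_j,u\in e$, any edge other than $e$ through $w_j$ meets $e$ only in $w_j$ and therefore cannot contain $u$, so it must be one of $e_1,\dots,e_r$; hence $w_j$ lies only in $e$ together with its $m_j$ edges $e_i$, and it becomes pendent after releasing. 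Since $g(1)=0$, this yields
\[
h(\mathcal{H}')-h(\mathcal{H})=\bigl(g(d_u+r)-g(d_u)\bigr)-\sum_{j=1}^{p}g(m_j+1),\qquad g(x):=x\log x,
\]
and it remains to show the right-hand side is positive.

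To control the right-hand side I would introduce the discrete difference $\delta(a):=g(a+1)-g(a)$, which, exactly as in the computation in the proof of Lemma~\ref{Lem:1}, equals $\log\xi+\tfrac{1}{\ln 2}$ for some $\xi\in(a,a+1)$ and is therefore strictly increasing in $a$. Telescoping gives
\[
g(d_u+r)-g(d_u)=\sum_{i=1}^{r}\delta(d_u+i-1),\qquad \sum_{j=1}^{p}g(m_j+1)=\sum_{j=1}^{p}\sum_{s=1}^{m_j}\delta(s),
\]
so both sides are sums of exactly $r$ values of the increasing function $\delta$. The arguments on the left are $d_u,d_u+1,\dots,d_u+r-1$; sorting the arguments on the right increasingly as $a_{(1)}\le\cdots\le a_{(r)}$ (the value $v$ occurs $\#\{j:m_j\ge v\}$ times, with $v=1$ occurring $p\ge1$ times), a short counting argument shows $a_{(i)}\le i$ for every $i$. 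Because $d_u\ge1$ forces $d_u+i-1\ge i\ge a_{(i)}$, monotonicity of $\delta$ gives the inequality term by term, and the whole problem reduces to producing one strict comparison.

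The subtle point — the main obstacle — is strictness, and this is precisely where the hypothesis that $e$ is \emph{non-pendent} becomes indispensable. In the degenerate configuration $p=1$ and $d_u=1$ one checks that releasing merely interchanges the roles of $u$ and the unique attachment vertex, leaving the degree sequence, and hence $h$, unchanged; so some hypothesis must exclude it. I would rule it out by noting that the non-pendent vertices of $e$ are exactly $w_1,\dots,w_p$ together with $u$ when $d_u\ge2$; thus $e$ non-pendent forces either $d_u\ge2$ (whence $d_u+i-1\ge i+1>i\ge a_{(i)}$ for all $i$) or else $d_u=1$ with $p\ge2$ (whence $a_{(r)}=\max_j m_j\le r-1<r=d_u+r-1$). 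In either case at least one term-by-term comparison is strict, giving $h(\mathcal{H}')-h(\mathcal{H})>0$. I expect the careful bookkeeping behind the bound $a_{(i)}\le i$ and the extraction of strictness from the non-pendent hypothesis to be the only genuinely delicate parts of the argument; everything else is the convexity of $g$ combined with the structural constraint supplied by linearity.
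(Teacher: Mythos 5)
Your proof is correct, but it takes a genuinely different route from the paper's. The paper first invokes Remark~\ref{Rem:2}(c) to assume without loss of generality that $d_u=\max_{v\in e}d_v$, and then performs the release one edge at a time: each single move of $e_i$ from $v_i$ to $u$ strictly increases $h$ by the same mean-value computation as in Lemma~\ref{Lem:1} (essentially the content of Lemma~\ref{Lem:3}), giving a chain $h(\mathcal{H})<h(\mathcal{H}_1)<\cdots<h(\mathcal{H}_r)$ with $\mathcal{H}_r\cong\mathcal{H}'$. That normalization silently disposes of the degenerate configuration you isolate ($p=1$, $d_u=1$): after the WLOG it never occurs. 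You instead keep $u$ arbitrary, use linearity to get the aggregate change in closed form (each attachment vertex $w_j$ satisfies $d_{w_j}=m_j+1$ and becomes pendent, which is correct), and then compare the two telescoped sums of first differences of $g(x)=x\log x$ term by term via the counting bound $a_{(i)}\le i$, which indeed holds since the number of right-hand arguments that are $\le i$ is $\sum_j\min(m_j,i)\ge\min\bigl(\sum_j m_j,\,i\bigr)=i$. What the paper's route buys is brevity and direct reuse of Lemma~\ref{Lem:1}; what yours buys is independence from the isomorphism claim of Remark~\ref{Rem:2}(c), an explicit formula for $h(\mathcal{H}')-h(\mathcal{H})$, and a sharp accounting of where linearity and non-pendency each enter — your degenerate example even shows the hypothesis on $e$ cannot be dropped. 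One small point to make explicit: in your strictness dichotomy, the branch $d_u\ge2$ should also record that $p\ge1$ (this follows from non-pendency, since $u$ is then only one of the at least two non-pendent vertices of $e$); otherwise, if $r=0$, both telescoped sums are empty and the difference is $0$ rather than positive.
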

\begin{proof}
Suppose that $u'\in e$ and $u\not=u'$. By (c) of Remark \ref{Rem:2},  if $\mathcal{H}''$ is the hypergraph obtained from $\mathcal{H}$ by an edge-releasing operation on
$e$ at $u'$, then $\mathcal{H}''$ is isomorphic to $\mathcal{H}'$. So, without loss of generality, we assume that $d_u=\max_{v\in e}\{d_v\}$. Let $e_1,e_2,\ldots,e_r$ be all the edges of $\mathcal{H}$ adjacent to $e$ but not containing $u$, and suppose that $e_i\cap e=\{v_i\}$ for $i=1,\ldots,r$. Then $d_u\ge d_{v_i}>d_{v_i}-1$, and $d_{v_i}\geq2$ for $i=1,\ldots,r$. Suppose that $\pi(\mathcal{H})=(\ldots,d_u,\ldots,d_{v_1},\ldots,d_{v_2},\ldots,d_{v_r},\ldots)$ is the non-increasing degree sequence of $\mathcal{H}$. Let $\mathcal{H}_1$ be the hypergraph obtained from $\mathcal{H}$ by moving an edge $e_1$ from $v_1$ to $u$ (see Definition \ref{Def:7}). Then $\mathcal{H}_1=(V_\mathcal{H}, E_{\mathcal{H}_1})$ is a hypergraph with
$E_{\mathcal{H}_1}=(E_\mathcal{H}\setminus\{e_1\})\cup\{e_1'\}$, where $e_1'=(e_1\setminus\{v_1\})\cup \{u\}$.
Clearly, the degree sequence of $\mathcal{H}_1$ is
\[\pi(\mathcal{H}_1)=(\ldots,d_u+1,\ldots,d_{v_1}-1,\ldots,d_{v_2},\ldots,d_{v_r},\ldots).\]
Then we have
\begin{align*}
h(\mathcal{H})-h(\mathcal{H}_1)
&=d_u \log d_u +d_{v_1} \log d_{v_1} - (d_u + 1) \log (d_u + 1)-(d_{v_1}- 1) \log (d_{v_1} - 1)\\
&=-\Big((d_u + 1) \log (d_u + 1)-d_u \log d_u\Big)+\Big(d_{v_1} \log d_{v_1}-(d_{v_1}- 1) \log (d_{v_1} - 1)\Big)\\
&=-\left(\log\xi_{1}+\frac{1}{\ln2}\right)+\left(\log\xi_{2}+\frac{1}{\ln2}\right)\\
&<0,
\end{align*}
where $\xi_{1}\in (d_u,d_u+1)$ and $\xi_{2}\in (d_{v_1}-1,d_{v_1})$. So, $h(\mathcal{H})<h(\mathcal{H}_1)$.

%\begin{eqnarray*}
%h(\mathcal{H})-h(\mathcal{H}_1)
%&=&d_u \log d_u +d_{v_1} \log d_{v_1} - (d_u + 1) \log (d_u + 1)-(d_{v_1}- 1) \log (d_{v_1} - 1)\\
%&=&-\left[d_u \log \frac{1}{d_u} +d_{v_1} \log \frac{1}{d_{v_1}} - (d_u + 1) \log \frac{1}{d_u + 1}-(d_{v_1}- 1) \log \frac{1}{d_{v_1} - 1}\right]\\
%&=&-(d_u+d_{v_1})\bigg{[}\frac{d_u}{d_u+d_{v_1}} \log \frac{d_u+d_{v_1}}{d_u} +\frac{d_{v_1}}{d_u+d_{v_1}} \log \frac{d_u+d_{v_1}}{d_{v_1}}\\
%&&- \frac{d_u + 1}{d_u+d_{v_1}} \log \frac{d_u+d_{v_1}}{d_u + 1}
%-\frac{d_{v_1}- 1}{d_u+d_{v_1}} \log \frac{d_u+d_{v_1}}{d_{v_1} - 1}\bigg{]}\\
%&=&(d_u+d_{v_1})\left[g\left(\frac{d_{v_1}}{d_u+d_{v_1}}\right)-g\left(\frac{d_u + 1}{d_u+d_{v_1}}\right)\right]\\
%&=&(d_u+d_{v_1})\bigg{[}g\left(\frac{1}{2}+\frac{1-[d_u-(d_{v_1}-1)]}{2(d_u+d_{v_1})}\right)
%-g\left(\frac{1}{2}+\frac{1+[d_u-(d_{v_1}-1)]}{2(d_u+d_{v_1})}\right)\bigg{]},
%\end{eqnarray*}
%where $g(x)=x\log x+(1-x)\log(1-x)$, $x\in (0,1)$. Note that for $x\in (0,1)$, $g'(x)=(\ln x-\ln(1-x))\frac{1}{\ln2}$ and $g''(x)=(\frac{1}{x}+\frac{1}{1-x})\frac{1}{\ln2}>0$.
%Then $g'(x)=0$ for $x=\frac{1}{2}$ , so $g'(x) < 0$ for $x\in (0,\frac{1}{2})$ and $g'(x) > 0$ for $x\in (\frac{1}{2},1)$, and thus indeed $$g\left(\frac{1}{2}+\frac{1-[d_u-(d_{v_1}-1)]}{2(d_u+d_{v_1})}\right)
%<g\left(\frac{1}{2}+\frac{1+[d_u-(d_{v_1}-1)]}{2(d_u+d_{v_1})}\right).$$
Define $\mathcal{H}_{i+1}$ to be the hypergraph obtained from $\mathcal{H}_i$ by moving an edge $e_i$ from $v_i$ to $u$ for $i=1,2,\ldots,r-1$. Iteratively applying the technique used to prove $h(\mathcal{H})<h(\mathcal{H}_1)$, we obtain
$$h(\mathcal{H})<h(\mathcal{H}_1)<\cdots<h(\mathcal{H}_{r-1})<h(\mathcal{H}_{r}).$$
Clearly, $\mathcal{H}_{r}$ is isomorphic to $\mathcal{H}'$. This completes the proof.\qed

%where $\pi(\mathcal{H}')=(\cdots,d_u+r,\cdots,d_{v_1}-1,\cdots,d_{v_2}-1,\cdots,d_{v_r}-1,\cdots)$.

\end{proof}

%$\pi(\mathcal{H}_2)=(\cdots,d_u+2,\cdots,d_{v_1}-1,\cdots,d_{v_2}-1,\cdots,d_{v_r},\cdots)$.
%
%$$h(\mathcal{H}_1)-h(\mathcal{H}_2)<0.$$
%$$\vdots$$
%$$\vdots$$

%$$h(\mathcal{H}_{r-1})-h(\mathcal{H}')<0.$$
%Then we have
%$$h(\mathcal{H})-h(\mathcal{H}')<0.$$
%i.e.,
%$$h(\mathcal{H})< h(\mathcal{H}').$$

Let $\mathcal{H}=(V_\mathcal{H}, E_\mathcal{H})$ be a connected hypergraph with the non-increasing degree sequence $\pi(\mathcal{H})=(d_1,d_2,\ldots,d_n)$. If there exist two vertices $v_i$ and $v_j$ such that $d_i \geq d_j$, where $v_j\in e\in E_\mathcal{H}$ and $v_i\notin e$, then we define a new hypergraph $\mathcal{H}'$, which is obtained from $\mathcal{H}$ by the operation of moving edge $e$ from $v_j$ to $v_i$. Thus $\pi(\mathcal{H}')=(d_1, d_2,\ldots, d_{i-1}, d_i+1, d_{i+1}, \ldots, d_{j-1}, d_j- 1, d_{j+1},\ldots, d_n)$. By modifying the proof of $h(\mathcal{H})<h(\mathcal{H}_1)$ in Lemma \ref{Lem:2}, we obtain the following result.

\begin{lemma}\label{Lem:3}
Let $\mathcal{H}$ and $\mathcal{H}'$ be as above.
Then $h(\mathcal{H})< h(\mathcal{H}')$.
\end{lemma}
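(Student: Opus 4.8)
The plan is to mirror the mean value theorem computation used for the inequality $h(\mathcal{H})<h(\mathcal{H}_1)$ in the proof of Lemma \ref{Lem:2}, since the degree sequences of $\mathcal{H}$ and $\mathcal{H}'$ differ in exactly the same fashion: the coordinate $d_i$ is raised to $d_i+1$ while the coordinate $d_j$ is lowered to $d_j-1$, and every other degree is unchanged. Consequently all the unaffected terms cancel in the difference $h(\mathcal{H})-h(\mathcal{H}')$, and the whole argument reduces to analysing the contributions of the two affected vertices $v_i$ and $v_j$.

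First I would set $g(x)=x\log x$, whose derivative is $g'(x)=\log x+\frac{1}{\ln 2}$, and write
\[
h(\mathcal{H})-h(\mathcal{H}')=\Big(d_i\log d_i-(d_i+1)\log(d_i+1)\Big)+\Big(d_j\log d_j-(d_j-1)\log(d_j-1)\Big).
\]
Applying the mean value theorem to $g$ on $[d_i,d_i+1]$ and on $[d_j-1,d_j]$ gives $g(d_i+1)-g(d_i)=\log\xi_1+\frac{1}{\ln 2}$ with $\xi_1\in(d_i,d_i+1)$, and $g(d_j)-g(d_j-1)=\log\xi_2+\frac{1}{\ln 2}$ with $\xi_2\in(d_j-1,d_j)$. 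Substituting these, the two $\frac{1}{\ln 2}$ terms cancel and the difference collapses to $h(\mathcal{H})-h(\mathcal{H}')=\log\xi_2-\log\xi_1$.

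The decisive step is the ordering of $\xi_1$ and $\xi_2$, and this is exactly where the hypothesis $d_i\ge d_j$ is used: since $\xi_1>d_i\ge d_j>\xi_2$, monotonicity of $\log$ yields $\log\xi_2<\log\xi_1$, hence $h(\mathcal{H})-h(\mathcal{H}')<0$, which is the desired conclusion $h(\mathcal{H})<h(\mathcal{H}')$. I expect the only point requiring care to be the boundary case $d_j=1$: then $d_j-1=0$, and one must invoke the convention $0\log 0=0$ together with the continuity of $g$ at $0$ and its differentiability on $(0,1)$, so that the mean value theorem still applies on $[0,1]$ and yields a valid $\xi_2\in(0,1)$. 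The chain $\xi_1>d_i\ge d_j=1>\xi_2$ then closes the argument just as in the generic case, so no separate treatment of the inequality itself is needed.
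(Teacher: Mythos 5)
Your proof is correct and is exactly the argument the paper intends: the paper gives no separate proof of Lemma~\ref{Lem:3}, stating only that it follows ``by modifying the proof of $h(\mathcal{H})<h(\mathcal{H}_1)$ in Lemma~\ref{Lem:2},'' and your mean value theorem computation with $\xi_2<d_j\le d_i<\xi_1$ is precisely that modification. Your extra care with the boundary case $d_j=1$ (using the convention $0\log 0=0$ so the mean value theorem still applies on $[0,1]$) is a point the paper silently skips, though in its applications the vertex losing an edge always has degree at least $2$.
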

%\begin{proof}
%If $d_i>d_j-1$, $\pi(\mathcal{H})=(\cdots,d_i,\cdots,d_j,\cdots)$, where $d_i\geq2$.
%
%$\pi(\mathcal{H}')=(\cdots,d_i+1,\cdots,d_j-1,\cdots)$.
%
%Then
%\begin{eqnarray*}
%h(\mathcal{H})-h(\mathcal{H}')
%&=&d_i \log d_i +d_j \log d_j - (d_i + 1) \log (d_i + 1)-(d_j- 1) \log (d_j - 1)\\
%&<&0.
%\end{eqnarray*}
%Then we have
%$$h(\mathcal{H})< h(\mathcal{H}').$$
%This completes the proof.
%\end{proof}
%\begin{lemma}
% Let $\mathcal{T}'$ be the supertree obtained from $\mathcal{T}$ by edge-releasing the non-pendent edge $e$ of $\mathcal{T}$.
%Then we have $h(\mathcal{T})\leq h(\mathcal{T}')$.
%\end{lemma}
\section{Extremality of $I_d^1(\mathcal{H})$ among all $k$-uniform ($k\geq3$) supertrees}

In this section, we investigate the extremality of  $I_d^1(\mathcal{H})$ among all $k$-uniform ($k\geq3$) supertrees.
\begin{lemma}\label{Lem:4}
 Let $\mathcal{T}$ be a $k$-uniform ($k\geq3$) supertree on $n$ vertices with $m=\frac{n-1}{k-1}\ge2$ edges. Then

\begin{itemize}
  \item[\rm (a)] $h(\mathcal{T})\geq 2(m-1)\log2$, with the equality holding if and only if $\mathcal{T}\in \mathcal{T}^{\ast}$, where $\mathcal{T}^{\ast}$ denotes the family of all $k$-uniform ($k\geq3$) supertrees on $n$ vertices with $m=\frac{n-1}{k-1}\ge2$ edges whose maximum degree is $2$; and
  \item[\rm (b)] $h(\mathcal{T})\leq m\log m$, with the equality holding if and only if $\mathcal{T}\cong S_{m+1}^{k}$.
\end{itemize}
\end{lemma}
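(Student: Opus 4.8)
The plan is to prove the two bounds by separate arguments that share one structural identity. Since $\mathcal{T}$ is a $k$-uniform supertree on $n$ vertices with $m$ edges, Proposition~\ref{Prop:1} gives $n=m(k-1)+1$, and combining this with $\sum_{i=1}^{n}d_i=km$ yields
\[
\sum_{i=1}^{n}(d_i-1)=km-n=m-1 .
\]
Because pendent vertices contribute $0$, this is effectively a sum over the non-pendent vertices, namely $\sum_{d_i\ge 2}(d_i-1)=m-1$; in particular, for $m\ge 2$ some vertex has degree at least $2$, so $\Delta(\mathcal{T})\ge 2$ always and degree $2$ is the smallest possible maximum degree.

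For the lower bound (a) I would reduce everything to a single pointwise inequality: for every integer $d\ge 2$,
\[
d\log d\ \ge\ 2(d-1)\log 2 ,
\]
with equality if and only if $d=2$. To prove this I set $g(d)=d\log d-2(d-1)\log 2$; then $g(2)=0$ and $g'(d)=\log d+\frac{1}{\ln 2}-2\log 2$, which is positive for $d\ge 2$ since $\log d\ge\log 2=1$ and $\frac{1}{\ln 2}>1$, so $g$ is strictly increasing on $[2,\infty)$. Summing this inequality over all non-pendent vertices and invoking the identity above gives
\[
h(\mathcal{T})=\sum_{d_i\ge 2} d_i\log d_i\ \ge\ 2\log 2\sum_{d_i\ge 2}(d_i-1)=2(m-1)\log 2 ,
\]
and equality forces every non-pendent vertex to have degree exactly $2$, i.e.\ $\mathcal{T}\in\mathcal{T}^{\ast}$.

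For the upper bound (b) I would argue via the edge-releasing operation. First I note that a supertree is linear, since two edges meeting in two vertices would form a cycle of length two; hence Lemma~\ref{Lem:2} and Proposition~\ref{Prop:4} apply. A direct count gives $h(S_{m+1}^{k})=m\log m$, the central vertex having degree $m$ and the remaining $m(k-1)$ vertices being pendent. By Lemma~\ref{Lem:2}, every edge-releasing operation on a non-pendent edge strictly increases $h$, while by Proposition~\ref{Prop:4} it keeps $\mathcal{T}$ a supertree with the same number $m$ of edges. Starting from $\mathcal{T}$ and releasing non-pendent edges repeatedly therefore produces a strictly $h$-increasing sequence of supertrees on a fixed vertex set; since there are only finitely many such supertrees, the process must terminate, and it can only stop at a supertree having no non-pendent edge.

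The step I expect to be the crux is identifying this terminal object as the hyperstar. I would show that if every edge of a connected supertree with $m\ge 2$ edges is pendent---so each edge has exactly $k-1$ pendent vertices and a single non-pendent vertex---then all these non-pendent vertices must coincide, since otherwise two edges in different ``non-pendent classes'' share no vertex and, as pendent vertices lie in a unique edge, the hypergraph would be disconnected. Thus the terminal supertree is exactly $S_{m+1}^{k}$, yielding $h(\mathcal{T})\le m\log m$ with equality precisely when $\mathcal{T}\cong S_{m+1}^{k}$. The only delicate points are this connectivity argument and the verification that a non-hyperstar supertree always offers a non-pendent edge to release (so that some reducing move exists); the monotonicity and supertree-preservation facts are handed to us directly by Lemma~\ref{Lem:2} and Proposition~\ref{Prop:4}.
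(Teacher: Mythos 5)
Your proof is correct, and part (a) takes a genuinely different route from the paper. For the lower bound the paper argues by contradiction: it takes a hypothetical minimizer with a vertex $u$ of degree at least $3$, moves an edge from $u$ to a pendent vertex of a pendent edge avoiding $u$, and invokes Lemma~\ref{Lem:1} to contradict minimality, then verifies the value $2(m-1)\log 2$ for members of $\mathcal{T}^{\ast}$ ``by simple computation.'' Your argument instead combines the identity $\sum_{i}(d_i-1)=km-n=m-1$ (from Proposition~\ref{Prop:1}) with the pointwise inequality $d\log d\ge 2(d-1)\log 2$ for $d\ge 2$, with equality exactly at $d=2$. This is more self-contained: it needs no exchange operation, no existence claim about a pendent edge avoiding $u$ (which the paper asserts without proof), and it delivers the equality characterization in the same stroke. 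It also transfers verbatim to the unicyclic and bicyclic analogues (Lemmas~\ref{Lem:5} and \ref{Lem:6}(a)), where the same identity gives $\sum_i(d_i-1)=m$ and $m+1$ respectively. For the upper bound (b) your route is essentially the paper's: both rest on edge-releasing, Lemma~\ref{Lem:2}, and Proposition~\ref{Prop:4}. The difference is bookkeeping: the paper runs an induction on the number $N_2$ of non-pendent vertices, while you run a termination argument (finitely many supertrees on a fixed vertex set, $h$ strictly increasing). A genuine merit of your write-up is that you prove the step the paper only asserts, namely that the terminal object is the hyperstar: if every edge of a connected supertree with $m\ge 2$ edges has exactly one non-pendent vertex, then any two edges with distinct non-pendent vertices are disjoint (a shared vertex would be non-pendent for both), forcing disconnection unless all such vertices coincide. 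You also make explicit that supertrees are linear (two edges sharing two vertices would form a cycle of length two), which is needed before Lemma~\ref{Lem:2} may be applied and which the paper uses tacitly.
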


\begin{proof} We prove (a) by contradiction. Suppose that $\mathcal{T}\notin \mathcal{T}^{\ast}$ attains the minimum value among all $k$-uniform ($k\geq3$) supertrees on $n$ vertices with $m=\frac{n-1}{k-1}\ge2$ edges. Then, there exists at least one vertex $u$ in $\mathcal{T}$ with $d_u\geq 3$. Let $e$ denote a non-pendent edge containing $u$, $e_1\not=e$ denote another edge containing $u$, and $e_0$ denote a pendent edge with $u\notin e_0$. Suppose that $u_s\in e_0$ and $d_{u_s}=1$.
Let $\mathcal{T}'=(V_{\mathcal{T}},E_{\mathcal{T}'})$ be the hypergraph with $E_{\mathcal{T}'}=(E_{\mathcal{T}}\setminus \{e_1\})\cup \{e_1'\}$, where $e_1'=(e_1\setminus \{u\})\cup\{u_s\}$. Then $\pi(\mathcal{T}) = (\ldots,d_{u},\ldots,d_{u_s},\ldots)$, and
$\pi(\mathcal{T}')=(\ldots,d_{u}-1,\ldots,d_{u_s}+1,\ldots)$. Note that $d_u\geq3=d_{u_s}+2$. Then, by Lemma \ref{Lem:1}, we have $h(\mathcal{T}) > h(\mathcal{T}')$, a contradiction.
This implies that only $\mathcal{T}\in \mathcal{T}^{\ast}$ can attain the minimum value among all $k$-uniform ($k\geq3$) supertrees on $n$ vertices with $m=\frac{n-1}{k-1}\ge2$ edges. By simple computation, we have $h(\mathcal{T})=2(m-1)\log2$ for any $\mathcal{T}\in \mathcal{T}^{\ast}$. This completes the proof of (a).

%We can prove that if $\mathcal{T}$ is a $k$-uniform supertree on $n$
%vertices (with $m=n'-1$ edges, where $n'=\frac{n-1}{k-1}+1$). Then $h(\mathcal{T})\geq h(\mathcal{T}^{\ast})$, the equality holds if and only if $\mathcal{T}\cong \mathcal{T}^{\ast}$.

Next, we prove (b) by the induction. Denote by $N_2(\mathcal{T})$ the number of non-pendent vertices of  $\mathcal{T}$.  If $\mathcal{T}$ is a $k$-uniform ($k\geq3$) supertree on $n$ vertices and $m=\frac{n-1}{k-1}\ge2$ edges with $N_2(\mathcal{T})=1$, then $\mathcal{T}\cong S_{m+1}^{k}$ and then $h(\mathcal{T})=m\log m$. If $\mathcal{T}$ is a $k$-uniform supertree on $n$ vertices and $m=\frac{n-1}{k-1}\ge2$ edges with $N_2(\mathcal{T})=2$, then $\mathcal{T}\ncong S_{m+1}^{k}$. In such case, $m\ge3$ and $\mathcal{T}$ has only two vertices (say $u$ and $v$) whose degrees are not less than  $2$. Suppose that $e$ is the non-pendent edge containing $u$ and $v$. By repeating the edge-releasing operation on $e$ at $u$ (see Definition \ref{Def:8}), we obtain $S_{m+1}^{k}$. Then, by  Lemma \ref{Lem:2}, we have $h(\mathcal{T})<h(S_{m+1}^{k})=m\log m$.

% \hd{Assume that $u\in e$. Let
% $\mathcal{T}''$ be the hypergraph obtained from $\mathcal{T}'$ by an edge-releasing operation on $e$ at $u$ (see Definition \ref{Def:8}). Then, by Proposition \ref{Prop:4} and Lemma \ref{Lem:2},Assume that $u\in e$. Let
% $\mathcal{T}''$ be the hypergraph obtained from $\mathcal{T}'$ by an edge-releasing operation on $e$ at $u$ (see Definition \ref{Def:8}). Then, by Proposition \ref{Prop:4} and Lemma \ref{Lem:2},Assume that $u\in e$. Let
% $\mathcal{T}''$ be the hypergraph obtained from $\mathcal{T}'$ by an edge-releasing operation on $e$ at $u$ (see Definition \ref{Def:8}). Then, by Proposition \ref{Prop:4} and Lemma \ref{Lem:2},Assume that $u\in e$. Let
% $\mathcal{T}''$ be the hypergraph obtained from $\mathcal{T}'$ by an edge-releasing operation on $e$ at $u$ (see Definition \ref{Def:8}). Then, by Proposition \ref{Prop:4} and Lemma \ref{Lem:2},}

 Assume that $h(\mathcal{T})< m\log m$ for all $k$-uniform ($k\geq3$) supertree $\mathcal{T}$ on $n$ vertices and $m=\frac{n-1}{k-1}\ge2$ edges with $1<N_2(\mathcal{T})\le l\geq2$. For a $k$-uniform ($k\geq3$) supertree $\mathcal{T}'$ on $n$ vertices and $m=\frac{n-1}{k-1}\ge2$ edges with $N_2(\mathcal{T}')=l+1$, it is easy to see that $\mathcal{T}'\ncong S_{m+1}^{k}$. Then, there exists one non-pendent edge $e$ in $\mathcal{T}'$.
 Assume that $u\in e$. Let
 $\mathcal{T}''$ be the hypergraph obtained from $\mathcal{T}'$ by an edge-releasing operation on $e$ at $u$ (see Definition \ref{Def:8}). Then, by Proposition \ref{Prop:4} and Lemma \ref{Lem:2},
 %( If there exists $u\in e$ such that $d_u> \max \{d_{v_i}\}-1\geq1$, then by Lemma 4, we have $h(\mathcal{T})< h(\mathcal{T}')$; otherwise, let $u\triangleq v_i$ whose degree is $\max \{d_{v_j}\}, j=1,\ldots, r$, then by Lemma 4, we have $h(\mathcal{T})< h(\mathcal{T}')$). and Lemma \ref{Lem:2}and $h(\mathcal{T}')< h(\mathcal{T}'')$h(\mathcal{T}')<
we obtain that $\mathcal{T}''$ is a $k$-uniform ($k\geq3$) supertree and $h(\mathcal{T}')< h(\mathcal{T}'')$. Note that
$N_2(\mathcal{T}'')< N_2(\mathcal{T}')=l+1$. Based on the assumption that $h(\mathcal{T''})< m\log m$ for all $k$-uniform ($k\geq3$) supertree $\mathcal{T''}$ on $n$ vertices and $m=\frac{n-1}{k-1}\ge2$ edges with $1<N_2(\mathcal{T''})\le l\geq2$, we have $h(\mathcal{T}')< h(\mathcal{T}'')<m\log m$. Thus, $h(\mathcal{T})< m\log m$ for all $k$-uniform ($k\geq3$) supertree $\mathcal{T}$ on $n$ vertices and $m=\frac{n-1}{k-1}\ge2$ edges with $N_2(\mathcal{T})\ge2$.

From the discussion above, we note that only $\mathcal{T}\cong S_{m+1}^{k}$ can attain the maximum value among all $k$-uniform ($k\geq3$) supertrees on $n$ vertices with $m=\frac{n-1}{k-1}\ge2$ edges, that is, $h(\mathcal{T})=m\log m$ if and only if $\mathcal{T} \cong S_{m+1}^{k}$. This completes the proof of (b).\qed
\end{proof}

From Lemma \ref{Lem:4} and Equality (\ref{Equ:3.1}), we get the following result.

\begin{theorem}\label{The:1}
 Let $\mathcal{T}$ be a $k$-uniform ($k\geq3$) supertree on $n$ vertices with $m=\frac{n-1}{k-1}\ge2$ edges and $\mathcal{T}^{\ast}$ as in Lemma \ref{Lem:4}.  Then
  $$
  \log(km)-\frac{\log m}{k}\le I_d^1(\mathcal{T})\leq\log(km)-\frac{2(m-1)\log2}{km},
  $$
where the first equality holds if and only if $\mathcal{T}\cong S_{m+1}^{k}$, and the second equality holds if and only if $\mathcal{T}\in \mathcal{T}^{\ast}$.
\end{theorem}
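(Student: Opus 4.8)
The plan is to read off both bounds directly from Lemma \ref{Lem:4} through the identity (\ref{Equ:3.1}), which rewrites the entropy as
\[
I_d^1(\mathcal{T}) = \log(km) - \frac{1}{km}\,h(\mathcal{T}).
\]
The crucial observation is that for every $k$-uniform supertree in the stated class the quantity $km = \sum_{i=1}^{n} d_i$ is fixed, depending only on $n$ and $k$. Hence $I_d^1$ is a strictly decreasing affine function of $h(\mathcal{T})$, so bounding $I_d^1$ is equivalent to bounding $h(\mathcal{T})$, with the two extremes interchanged and all equality cases preserved verbatim.

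First I would invoke part (b) of Lemma \ref{Lem:4}, namely $h(\mathcal{T}) \le m\log m$ with equality if and only if $\mathcal{T} \cong S_{m+1}^{k}$. Since the coefficient $-\tfrac{1}{km}$ is negative, the maximal value of $h$ yields the minimal value of $I_d^1$; substituting gives
\[
I_d^1(\mathcal{T}) \ge \log(km) - \frac{m\log m}{km} = \log(km) - \frac{\log m}{k},
\]
and strict monotonicity of the map $h \mapsto I_d^1$ transfers the equality condition, so the lower bound is attained exactly when $\mathcal{T} \cong S_{m+1}^{k}$.

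Symmetrically, I would apply part (a) of Lemma \ref{Lem:4}, namely $h(\mathcal{T}) \ge 2(m-1)\log 2$ with equality if and only if $\mathcal{T} \in \mathcal{T}^{\ast}$. Here the minimal value of $h$ produces the maximal value of $I_d^1$, giving
\[
I_d^1(\mathcal{T}) \le \log(km) - \frac{2(m-1)\log 2}{km},
\]
with equality precisely for $\mathcal{T} \in \mathcal{T}^{\ast}$. There is no genuine obstacle at this stage: all the combinatorial difficulty---the edge-moving and edge-releasing arguments, the induction on the number of non-pendent vertices, and the identification of the extremal families---has already been absorbed into Lemma \ref{Lem:4}. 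The only things left to check are the arithmetic simplification $\tfrac{m\log m}{km} = \tfrac{\log m}{k}$ and the fact that $km > 0$ guarantees the inequalities do not reverse, so the theorem follows as an immediate corollary.
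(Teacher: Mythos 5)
Your proposal is correct and matches the paper's own approach exactly: the paper derives Theorem \ref{The:1} directly from Lemma \ref{Lem:4} together with Equality (\ref{Equ:3.1}), using precisely the observation that $I_d^1(\mathcal{T})=\log(km)-\tfrac{1}{km}h(\mathcal{T})$ is a decreasing affine function of $h(\mathcal{T})$, so the bounds and equality cases transfer with the extremes interchanged. Nothing is missing.
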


\section{Extremality of $I_d^1(\mathcal{H})$ among all unicyclic $k$-uniform hypergraphs}
In this section, we investigate the extremality of $I_d^1(\mathcal{H})$ among all unicyclic $k$-uniform ($k\geq3$) hypergraphs.
\begin{lemma}\label{Lem:5}
 Let $\mathcal{H}$ be a unicyclic $k$-uniform ($k\geq3$) hypergraph on $n$ vertices with $m=\frac{n}{k-1}\ge2$ edges. Then %(with $m=n'$ edges, where $n'=\frac{n}{k-1}$).

\begin{itemize}
  \item[\rm (a)] $h(\mathcal{H})\geq 2m\log2$, with the equality holding if and only if $\mathcal{H}\in \mathcal{H}^{I}$, where $\mathcal{H}^{I}$ denotes the family of all unicyclic $k$-uniform ($k\geq3$) hypergraphs on $n$ vertices with $m=\frac{n}{k-1}\ge2$ edges whose maximum degree is $2$; and
  \item[\rm (b)] $h(\mathcal{H})\leq m\log m+2\log 2$, with the equality holding if and only if $\mathcal{H}\cong \mathcal{H}^{II}$, where $\mathcal{H}^{II}$ is shown in Figure \ref{unicyclic}.
\end{itemize}
\end{lemma}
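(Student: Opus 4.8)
The plan is to follow the strategy of Lemma \ref{Lem:4}, replacing the structural facts about supertrees by their unicyclic analogues, and to use the two edge-moving inequalities Lemma \ref{Lem:1} (which strictly decreases $h$) and Lemma \ref{Lem:3} (which strictly increases $h$) as the engines of the lower and upper bounds respectively. The one genuinely new feature compared with the supertree case is that after every edge move one must check that the hypergraph stays connected, because by Proposition \ref{Prop:5} connectivity is precisely what guarantees that the result is still unicyclic; the absence of multiple edges must be monitored as well. Note also that the average degree here equals $\frac{km}{m(k-1)}=\frac{k}{k-1}<2$ for $k\geq 3$, so low-degree vertices are always plentiful.

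For part (a) I would first record the extremal value. If $\Delta(\mathcal{H})=2$, then writing $n_1,n_2$ for the numbers of vertices of degree $1$ and $2$, the identities $n_1+n_2=n=m(k-1)$ and $n_1+2n_2=\sum_i d_i=km$ force $n_2=m$, whence $h(\mathcal{H})=2m\log 2$ for every $\mathcal{H}\in\mathcal{H}^{I}$ (in particular $\mathcal{H}^{I}\neq\emptyset$). For the bound I argue by contradiction as in Lemma \ref{Lem:4}(a): let $\mathcal{H}$ minimize $h$ and suppose $\mathcal{H}\notin\mathcal{H}^{I}$, so some vertex $u$ has $d_u\geq 3$. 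A pendent vertex $u_s$ exists since the average degree is below $2$, and because $u$ lies in at least three edges while $u_s$ lies in only one, there is an edge $e_1\ni u$ with $u_s\notin e_1$. Moving $e_1$ from $u$ to $u_s$ and invoking $d_u\geq 3=d_{u_s}+2$, Lemma \ref{Lem:1} gives $h(\mathcal{H})>h(\mathcal{H}')$; as $u$ keeps degree $\geq 2$ and $u_s$ is still held by its original edge, $\mathcal{H}'$ is connected, hence unicyclic by Proposition \ref{Prop:5}, contradicting minimality. Thus the minimizers are exactly the members of $\mathcal{H}^{I}$.

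For part (b) I would instead concentrate the degree mass onto a single vertex using Lemma \ref{Lem:3}. Let $\mathcal{H}$ be a maximizer and let $w$ be a vertex of maximum degree. If $d_w<m$, then some edge $e$ does not contain $w$; choosing $e$ adjacent to an edge $f\ni w$ and picking $p\in e\cap f$ with $p\neq w$, I would move $e$ from $p$ to $w$. Since $d_w\geq d_p$, Lemma \ref{Lem:3} yields a strict increase of $h$, while $p$ stays attached through $f$ and the remainder of $e$ is carried to $w$, so the result is connected, hence unicyclic by Proposition \ref{Prop:5} — contradicting maximality. Therefore $d_w=m$, i.e. $w$ lies in every edge. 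A vertex count then closes the argument: $\sum_{v\neq w}(d_v-1)=km-m-(m(k-1)-1)=1$, so exactly one further vertex has degree $2$ (shared by two edges) and all remaining vertices are pendent. This forces $\mathcal{H}\cong\mathcal{H}^{II}$ and gives $h(\mathcal{H})=m\log m+2\log 2$, which is consequently the maximum and is attained only by $\mathcal{H}^{II}$.

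The main obstacle in both parts is not the inequalities, which Lemmas \ref{Lem:1} and \ref{Lem:3} supply directly, but the bookkeeping that each move remains inside the class of simple, connected (hence unicyclic) hypergraphs. Connectivity is the easier half, arranged by always reattaching the moved edge at a vertex that keeps an independent route to the rest of $\mathcal{H}$ (through the original edge of $u_s$ in part (a), through $f$ in part (b)). Avoiding multiple edges is the delicate point: the replacement $e\mapsto(e\setminus\{x\})\cup\{y\}$ can coincide with an existing edge only when two edges already share $k-1$ vertices, a very constrained configuration, and I would dispose of the few such degenerate cases by hand — either by choosing the moved vertex differently, or by observing that such a configuration already places $\mathcal{H}$ in the claimed extremal family.
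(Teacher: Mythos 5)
Your part (a) has a genuine gap: the connectivity claim that drives the contradiction is false. You move an edge $e_1\ni u$ to an \emph{arbitrary} pendent vertex $u_s\notin e_1$ and assert that ``$u$ keeps degree $\geq 2$ and $u_s$ is still held by its original edge'' forces $\mathcal{H}'$ to be connected. It does not. Take $k\geq 3$ and the unicyclic hypergraph with five edges $g_1=\{a,b,x_1,\dots,x_{k-2}\}$, $g_2=\{a,b,y_1,\dots,y_{k-2}\}$ (these two form the cycle), $e_1=\{a,u,z_1,\dots,z_{k-2}\}$, and two pendent edges $e_2,e_3$ attached at $u$, so that $d_u=3$. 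Choose $u_s=x_1$, a pendent vertex of $g_1$; all your hypotheses hold ($u_s\notin e_1$, $u$ retains degree $2$, $u_s$ keeps its edge), yet after replacing $e_1$ by $e_1'=(e_1\setminus\{u\})\cup\{x_1\}$ the component containing $u$, $e_2$, $e_3$ meets no vertex of $e_1'$, so $\mathcal{H}'$ is disconnected. Then Proposition \ref{Prop:5} does not apply, $\mathcal{H}'$ lies outside the class being minimized over, and no contradiction with minimality is obtained. The repair is exactly the care the paper takes: one must choose $u_s$ inside the component of $u$ in $\mathcal{H}-e_1$, which the paper arranges by taking $u_s$ to be the endpoint of a longest path from $u$ using only vertices and edges off the cycle $C$, distinguishing the cases $u\in C$ and $u\notin C$; in two of its three cases the paper even performs a \emph{double} move (moving $e_1$ from $v_1$ to $u_s$ and a second edge $f_1$ from $u$ to $v_1$) precisely so that connectivity survives. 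One also needs an argument that a pendent vertex exists in the relevant component; your average-degree remark produces a pendent vertex somewhere in $\mathcal{H}$, not where you need it.

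Your part (b), by contrast, is sound and genuinely different from the paper's route. The paper proves three structural claims (a maximum-degree vertex lies on $C$, then $|e(C)|=2$, then $\Delta(\mathcal{H})=m$); you go straight to $\Delta(\mathcal{H})=m$ with a single well-chosen move --- and here your connectivity argument is correct, because the detachment vertex $p$ stays tied to $w$ through $f$, while every other component of $\mathcal{H}-e$ meets $e\setminus\{p\}\subseteq e'$ --- and then you finish with the count $\sum_{v\neq w}(d_v-1)=1$, which pins down the degree sequence $(m,2,1,\dots,1)$ and hence $\mathcal{H}^{II}$ more cleanly than the paper's Claims 1--3. Both you and the paper leave the simplicity (no multiple edges) of the modified hypergraph informal; your closing observation that a duplicate can arise only from two edges sharing $k-1$ vertices (hence only for $k=3$, along the unique cycle) and can be avoided by re-choosing the moved vertex is the right fix, but in a final write-up it should be carried out rather than sketched.
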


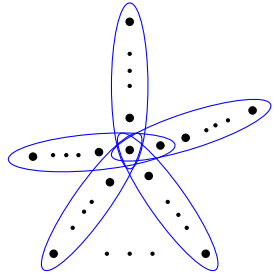
\begin{figure}[h]
\centering
\vspace{-0.35cm}
\begin{tikzpicture}[scale=2.0]
\draw[rotate around={55:(0,0)},blue] (0,0) ellipse (0.55 and 0.12);
\filldraw [black]
%(-0.075,0.769)circle (1.3pt)
%(-0.125,0.689)circle (1.3pt)
%(-0.025,0.849)circle (1.3pt)
%center
(0.2500,0.34380)circle (0.7pt)
 (0.2500,0.9816)circle (0.3pt)
(0.2500,0.5564)circle (0.7pt)
(0.2500,1.1942)circle (0.7pt)
    (0.250,0.769)circle (0.3pt)
    (0.250,0.869)circle (0.3pt)
  %zuoxiao
    (0.12,0.13)circle (0.7pt)
    (0,0) circle (0.3pt)
    (-0.05,-0.065) circle (0.3pt)
     %zuohengxiao
     (0.0480,0.33)circle (0.7pt)
     (-0.087,0.31)circle (0.3pt)
     (-0.167,0.31)circle (0.3pt)
   %youxiao
    (0.3750,0.1719)circle (0.7pt)
    (0.5,0)circle (0.3pt)
    (0.57,-0.0859)circle (0.3pt)
    %youhengxiao
    (0.618,0.425)circle (0.7pt)

    (0.754,0.475)circle (0.3pt)
    (0.814,0.5075)circle (0.3pt)
(0.8960,0.5406)circle (0.3pt)
(0.4520,0.3738)circle (0.7pt)

    ( -0.2500,-0.3438)circle (0.7pt)
    (0.2500,-0.3438)circle (0.3pt)
     (0.1,-0.3438)circle (0.3pt)
   (0.40,-0.3438)circle (0.3pt)

   (0.7500,-0.3438)circle (0.7pt)
  ( 1.0580,0.6062)circle (0.7pt)

    %four
    (-0.1250,-0.1719)circle (0.3pt)
    (0.6250,-0.1719)circle (0.3pt)

  (-0.2540,0.31)circle (0.3pt)
   (-0.3860,0.3)circle (0.7pt);
 %B
\draw[rotate around={125:(0.5,0)},blue] (0.5,0) ellipse (0.55 and 0.12);
\draw[rotate around={18:(0.654,0.475)},blue] (0.654,0.475) ellipse (0.55 and 0.12);
\draw[rotate around={90:(0.250,0.769)},blue] (0.250,0.769) ellipse (0.55 and 0.12);
\draw[rotate around={5:(0.0000,0.3282 )},blue] (0.0000,0.3282) ellipse (0.55 and 0.12);
\end{tikzpicture}
\caption{The unicyclic $k$-uniform hypergraph $\mathcal{H}^{II}$}\label{unicyclic}
\end{figure}

\begin{proof}
 We prove (a) by contradiction. Suppose that $\mathcal{H}\notin \mathcal{H}^{I}$ attains the minimum value among all unicyclic $k$-uniform ($k\geq3$) hypergraphs on $n$ vertices with $m=\frac{n}{k-1}\ge2$ edges. Then, there exists at least one vertex $u$ with $d_u\geq 3$.
Let $e$ denote a non-pendent edge containing $u$ and $C=v_1e_1v_2e_2\cdots e_tv_{t+1}~(v_{t+1}=v_1)$  the unique cycle in $\mathcal{H}$. In the following, we say $u\in C$ if $u\in \cup_{i=1}^t e_i$; otherwise, $u\notin C$, and $f\in C$ if $f=e_i$ for some $i\in\{1,2,\ldots,t\}$; otherwise, $f\notin C$.

 \noindent{\bf Case 1.} $u\in C$.

  Without loss of generality, we assume $e=e_1$ and $u\in e_1$. Find a longest path $P=uf_1u_1f_2\cdots f_su_s$ starting at $u$ such that $u_i\notin C$ and $f_i\notin C$ for $i=1,2,\ldots,s$. Obviously, $d_{u_s}=1$.

\noindent{\bf Subcase 1.1} $u=v_1$ or $v_2$.

Without loss of generality, we assume $u=v_1$. Then $d_{v_1}\geq3$.
 Let $\mathcal{H}'=(V_{\mathcal{H}},E_{\mathcal{H}'})$ be the hypergraph with $E_{\mathcal{H}'}=(E_{\mathcal{H}}\setminus \{e_1\})\cup \{e_1'\}$, where $e_1'=(e_1\setminus\{v_1\})\cup\{u_s\}$. Then, by Proposition \ref{Prop:5}, $\mathcal{H}'$ is also a unicyclic $k$-uniform hypergraph. Note that $\pi(\mathcal{H})=(\ldots, d_{v_1}, \ldots, d_{u_s},\ldots)$,
$\pi(\mathcal{H}')=(\ldots, d_{v_1}-1, \ldots, d_{u_s}+1,\ldots)$, and $d_{v_1}\geq 3=d_{u_s}+2$. Then, by Lemma \ref{Lem:1}, we have $h(\mathcal{H})>h(\mathcal{H}')$, a contradiction.

\noindent{\bf Subcase 1.2} $u\neq v_1,v_2$.

Let $f_1\not=e_1$ denote another edge containing $u$.
Let $\mathcal{H}'=(V_{\mathcal{H}},E_{\mathcal{H}'})$ be the hypergraph with $E_{\mathcal{H}'}=(E_{\mathcal{H}}\setminus \{e_1, f_1\})\cup \{e_1',f_1'\}$, where $e_1'=(e_1\setminus\{v_1\})\cup\{u_s\}$ and $f_1'=(f_1\setminus\{u\})\cup\{v_1\}$. Then, by Proposition \ref{Prop:5}, $\mathcal{H}'$ is also a unicyclic $k$-uniform hypergraph. Note that $\pi(\mathcal{H})=(\ldots, d_{u}, \ldots, d_{u_s},\ldots)$,$\pi(\mathcal{H}')=(\ldots, d_u-1, \ldots, d_{u_s}+1,\ldots)$, and $d_{u}\geq 3=d_{u_s}+2$. Then, by Lemma \ref{Lem:1}, we get $h(\mathcal{H})>h(\mathcal{H}')$, a contradiction.

\noindent{\bf Case 2.} $u\notin C$.

Find a longest path $Q=vf_1u_1f_2\cdots f_su_s$ ($u_i\notin C$ and $f_i\notin C$ for $i=1,2,\ldots,s$) starting at $v$ such that $v\in e_i$ for some $i\in\{1,2,\ldots,t\}$ and $f_{j-1}\cap f_j=\{u\}$ for some $j\in\{1,2,\ldots,s\}$.
Let $\mathcal{H}'=(V_{\mathcal{H}},E_{\mathcal{H}'})$ be the hypergraph with $E_{\mathcal{H}'}=(E_{\mathcal{H}}\setminus \{e_i, f_j\})\cup \{e_i',f_j'\}$, where $e_i'=(e_i\setminus\{v_i\})\cup\{u_s\}$ and $f_j'=(f_j\setminus\{u\})\cup\{v_i\}$. Then, by Proposition \ref{Prop:5}, $\mathcal{H}'$ is also a unicyclic $k$-uniform hypergraph. Note that $\pi(\mathcal{H})=(\ldots,d_u,\ldots,d_{u_s},\ldots)$, $\pi(\mathcal{H}')=(\ldots,d_u-1,\ldots,d_{u_s}+1,\ldots)$, and $d_u\geq 3=d_{u_s}+2$. Then, by Lemma \ref{Lem:1}, we have $h(\mathcal{H})>h(\mathcal{H}')$, a contradiction.

Thus, the discussions above imply that only $\mathcal{H}\in \mathcal{H}^{\uppercase\expandafter{\romannumeral1}}$ can attain the minimum value among all unicyclic $k$-uniform ($k\geq3$) hypergraphs on $n$ vertices with $m=\frac{n}{k-1}\ge2$ edges. By simple computation, we have $h(\mathcal{H})= 2m\log2$ for any $\mathcal{H}\in \mathcal{H}^{I}$. This completes the proof of (a). \medskip

%From the above three claims, we can easily obtain that $h(\mathcal{H})$ attains the minimum value if and only if $\mathcal{H}\cong \mathcal{H}^{\ast}$.\qed
%\hd{If $d_{v_1}=2$ and $d_{u}=2$. Let $f_1'=(f_1\setminus\{u\})\cup\{v_1\}$.
% $\mathcal{H}'=\mathcal{H}-f_1+f_1'$, then $\pi(\mathcal{H})=(d_1, d_2,\ldots, d_{v_1}, \ldots, d_u,\ldots, d_n)$. $\mathcal{H}'$ be the graph obtained from $\mathcal{H}$ by replacing
%the pair $(d_{v_1}, d_u)$ by the pair $(d_{v_1}+1, d_u-1)$, that is, $\pi(\mathcal{H}')=(d_1, d_2,\ldots, d_{v_1}+1,, \ldots, d_u-1,\ldots, d_n)$. By Lemma 2, we have $h(\mathcal{H})>h(\mathcal{H}'),$
%which contradicts to the choice of $\mathcal{H}$.}

Next, we give the proof of (b). Suppose that $\mathcal{H}$ attains the maximum value among all unicyclic $k$-uniform ($k\geq3$) hypergraphs on $n$ vertices with $m=\frac{n}{k-1}\ge2$ edges. Again, we use $C=v_1e_1v_2e_2\cdots e_tv_{t+1}~(v_{t+1}=v_1)$ to denote the unique cycle in $\mathcal{H}$.
\medskip

\noindent{\bf Claim 1.} \emph{There exists one vertex $v_i\in C$ with the maximum degree $\Delta(\mathcal{H})$.}
\medskip
%find the longest path $Q=vf_1u_1f_2\cdots f_su_s$ ($u_i\notin C$ and $f_i\notin C$ for $i=1,2,\ldots,s$) starting at $v$ such that $v\in e_i$ for some $i\in\{1,2,\ldots,t\}$ and $f_{j-1}\cap f_j=\{u\}$ for some $j\in\{1,2,\ldots,s\}$.

Suppose that there is no vertex on the cycle $C$ possessing the maximum degree $\Delta(\mathcal{H})$. Choose a vertex $v\notin C$ such that $d(v)=\Delta(\mathcal{H})$. Find a path $P_1=uf_1u_1f_2\cdots u_{s-1}f_sv$ ($u_i\notin C$ for $i=1,2,\ldots,s-1$ and $f_j\notin C$ for $j=1,2,\ldots,s$)
 starting at $u$ such that $u\in e_i$ for some $i\in\{1,2,\ldots,t\}$.
Let $\mathcal{H}'=(V_{\mathcal{H}},E_{\mathcal{H}'})$ be the hypergraph with $E_{\mathcal{H}'}=(E_{\mathcal{H}}\setminus \{e_i\})\cup \{e_i'\}$, where $e_i'=(e_i\setminus \{u\})\cup\{v\}$. By Proposition \ref{Prop:5}, $\mathcal{H}'$ is also a unicyclic $k$-uniform hypergraph. Note that  $\pi(\mathcal{H}) = (d_v,\ldots,d_u,\ldots)$,
$\pi(\mathcal{H}')=(d_v+1,\ldots,d_u-1,\ldots)$, and $d_v>d_u-1\ge1$. By Lemma \ref{Lem:3}, we obtain that $h(\mathcal{H})< h(\mathcal{H}')$, a contradiction.
\medskip

\noindent{\bf Claim 2.} \emph{Let $e(C)=\{e ~|~ e \text{ is an edge} \in C\}$. Then $|e(C)|=2.$}
\medskip

Without loss of generality, suppose that $d(v_0)=\Delta(\mathcal{H})$ and $v_2\not=v_0\in e_1$ ($v_0$ can be equal to $v_1$). If $|e(C)|\ge 3$, then
let $\mathcal{H}'=(V_{\mathcal{H}},E_{\mathcal{H}'})$ be the hypergraph with $E_{\mathcal{H}'}=(E_{\mathcal{H}}\setminus \{e_2\})\cup \{e_2'\}$, where $e_2'=(e_2\setminus \{v_2\})\cup\{v_0\}$. By Proposition \ref{Prop:5}, $\mathcal{H}'$ is also a unicyclic $k$-uniform hypergraph. Note that $\pi(\mathcal{H}) = (d_{v_0},\ldots,d_{v_2},\ldots)$, $\pi(\mathcal{H}')=(d_{v_0}+1,\ldots,d_{v_2}-1,\ldots)$, and $d_{v_0}\ge d_{v_2} >d_{v_2}-1\ge1$. By Lemma \ref{Lem:3}, we have $h(\mathcal{H})< h(\mathcal{H}')$, a contradiction.

\medskip

%If $|e(C)|\geq4$, then let $\mathcal{H}'=(V_{\mathcal{H}},E_{\mathcal{H}'})$ be the hypergraph with $E_{\mathcal{H}'}=(E_{\mathcal{H}}\setminus \{e_3\})\cup \{e_3'\}$, where $e_3'=(e_3\setminus \{v_4\})\cup\{v_0\}$. By Proposition \ref{Prop:5}, $\mathcal{H}'$ is also a $k$-uniform unicyclic hypergraph. Note that $\pi(\mathcal{H}) = (d_{v_0},\ldots,d_{v_4},\ldots)$,  $\pi(\mathcal{H}')=(d_{v_0}+1,\ldots,d_{v_4}-1,\ldots)$, and $d_{v_0}>d_{v_4}-1\ge1$.  By Lemma \ref{Lem:3}, we have $h(\mathcal{H})< h(\mathcal{H}')$, a contradiction.

\noindent{\bf Claim 3.} \emph{$\Delta(\mathcal{H})=m.$}
\medskip

If $\Delta(\mathcal{H})\leq m-1$, without loss of generality,  suppose that $d(v_0)=\Delta(\mathcal{H})$. Then, there exists at least one edge $e\in E(\mathcal{H})$ with $v_0\notin e$.
Let $e_i$ for some $i\in\{1,2,\ldots,t\}$ be the edge which adjacent to $e$ and $e_i\cap e=\{w\}$ and $\mathcal{H}'=(V_{\mathcal{H}},E_{\mathcal{H}'})$ be the hypergraph with $E_{\mathcal{H}'}=(E_{\mathcal{H}}\setminus \{e\})\cup \{e'\}$, where $e'=(e\setminus \{w\})\cup\{v_0\}$. By Proposition \ref{Prop:5}, $\mathcal{H}'$ is also a unicyclic $k$-uniform hypergraph. Note that $\pi(\mathcal{H}) = (d_{v_0},\ldots,d_w,\ldots)$,  $\pi(\mathcal{H}')=(d_{v_0}+1,\ldots,d_w-1,\ldots)$, and $d_{v_0}>d_w-1\ge1$. By Lemma \ref{Lem:3}, we obtain that $h(\mathcal{H})< h(\mathcal{H}')$, a contradiction.\medskip

From Claims 1-3, we obtain that $h(\mathcal{H})$ attains the maximum value among all unicyclic $k$-uniform ($k\geq3$) hypergraphs on $n$ vertices with $m=\frac{n}{k-1}\ge2$ edges if and only if $\mathcal{H}\cong \mathcal{H}^{\uppercase\expandafter{\romannumeral2}}$. By simple computation, we have $h(\mathcal{H})= m\log m+2\log 2$ for $\mathcal{H}\cong \mathcal{H}^{II}$. This completes the proof of (b).\qed
\end{proof}

From Lemma \ref{Lem:5} and Equality (\ref{Equ:3.1}), we have the following result.

\begin{theorem}\label{The:2}
 Let $\mathcal{H}$ be a unicyclic $k$-uniform ($k\geq3$) hypergraph on $n$ vertices with $m=\frac{n}{k-1}\ge2$ edges, and let $\mathcal{H}^{I}$, $\mathcal{H}^{II}$ be as in Lemma \ref{Lem:5}. Then
 $$
\log(km)-\frac{m\log m+2\log 2}{km}\le I_d^1(\mathcal{H})\leq \log(km)-\frac{2\log2}{k},
 $$
where the first equality holds if and only if $\mathcal{H}\cong  \mathcal{H}^{II}$, and the second equality holds if and only if $\mathcal{H}\in \mathcal{H}^{I}$.
\end{theorem}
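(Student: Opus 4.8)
The plan is to derive Theorem \ref{The:2} directly from Lemma \ref{Lem:5}, exploiting the fact that within the class of unicyclic $k$-uniform hypergraphs on $n$ vertices the value $I_d^1(\mathcal{H})$ depends on $\mathcal{H}$ only through $h(\mathcal{H})$, and does so in a strictly decreasing fashion. Indeed, by Proposition \ref{Prop:2} the number of edges is fixed at $m=\frac{n}{k-1}$ throughout this class, so the degree sum $\sum_i d_i = km$ is a constant; hence Equality (\ref{Equ:3.1}) reads
\[
I_d^1(\mathcal{H}) = \log(km) - \frac{1}{km}\,h(\mathcal{H}),
\]
in which $\log(km)$ and the coefficient $\frac{1}{km}$ are fixed. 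Thus $I_d^1$ is a strictly decreasing affine function of $h$, and extremizing $I_d^1$ reduces to extremizing $h$ in the opposite direction, which is precisely the content of Lemma \ref{Lem:5}.

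For the upper bound I would invoke part (a) of Lemma \ref{Lem:5}, namely $h(\mathcal{H}) \geq 2m\log 2$. Because of the minus sign, substituting this smallest admissible value of $h$ into the displayed identity and simplifying $\frac{2m\log 2}{km} = \frac{2\log 2}{k}$ yields $I_d^1(\mathcal{H}) \leq \log(km) - \frac{2\log 2}{k}$. Since $h \mapsto I_d^1$ is strictly monotone, hence injective, equality holds exactly when $h(\mathcal{H}) = 2m\log 2$, which by Lemma \ref{Lem:5}(a) occurs if and only if $\mathcal{H}\in\mathcal{H}^{I}$. Symmetrically, for the lower bound I would use part (b), $h(\mathcal{H}) \leq m\log m + 2\log 2$; substituting this largest admissible value gives $I_d^1(\mathcal{H}) \geq \log(km) - \frac{m\log m + 2\log 2}{km}$, with equality if and only if $h(\mathcal{H}) = m\log m + 2\log 2$, i.e.\ if and only if $\mathcal{H}\cong\mathcal{H}^{II}$.

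Since Lemma \ref{Lem:5} is already established, no genuine obstacle remains: the entire combinatorial weight of the theorem is carried by that lemma, and Theorem \ref{The:2} is a one-line corollary of it combined with the monotone dependence recorded in (\ref{Equ:3.1}). The only points requiring a moment of care are purely formal, namely tracking that a lower bound on $h$ produces an \emph{upper} bound on $I_d^1$ and vice versa (a consequence of the negative sign in front of $h$), and the trivial cancellation of $m$ in the constant of the upper bound.
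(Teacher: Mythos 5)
Your proposal is correct and follows exactly the paper's route: the paper derives Theorem \ref{The:2} as an immediate consequence of Lemma \ref{Lem:5} together with Equality (\ref{Equ:3.1}), which is precisely the substitution-and-sign-reversal argument you spell out. Your write-up merely makes explicit the bookkeeping (a lower bound on $h$ gives an upper bound on $I_d^1$ and vice versa, plus the cancellation $\frac{2m\log 2}{km}=\frac{2\log 2}{k}$) that the paper leaves implicit.
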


\section{Extremality of $I_d^1(\mathcal{H})$ among all bicyclic $k$-uniform ($k\geq3$) hypergraphs}

In this section, we investigate the extremality of $I_d^1(\mathcal{H})$ among all bicyclic $k$-uniform ($k\geq3$) hypergraphs.
\begin{lemma}\label{Lem:6}
 Let $\mathcal{H}$ be a bicyclic $k$-uniform ($k\geq3$) hypergraph on $n$ vertices with $m=\frac{n+1}{k-1}\ge2$ edges. Then %(with $m=n'$ edges, where $n'=\frac{n}{k-1}$).

\begin{itemize}
  \item[\rm (a)]$h(\mathcal{H})\geq 2(m+1)\log2$, with the equality holding if and only if $\mathcal{H}\in \mathcal{H}^{III}$, where $\mathcal{H}^{III}$ denotes the family of all bicyclic $k$-uniform ($k\geq3$) hypergraphs on $n$ vertices with $m=\frac{n+1}{k-1}\ge2$ edges whose maximum degree is $2$; and
  \item[\rm (b)] $h(\mathcal{H})\leq m\log m+4\log 2$, with the equality holding if and only if $\mathcal{H}\cong \mathcal{H}^{IV}$ or $\mathcal{H}\cong \mathcal{H}^{V}$, where $\mathcal{H}^{IV}$ and $\mathcal{H}^{V}$ are shown in Figure \ref{bicyclic}.
\end{itemize}
\end{lemma}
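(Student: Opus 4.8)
The plan is to follow the same two-pronged strategy used for Lemma~\ref{Lem:5}, replacing Proposition~\ref{Prop:5} by Proposition~\ref{Prop:6} throughout so that every edge-moving operation keeps the hypergraph bicyclic once connectedness has been checked.

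For part (a) I would argue by contradiction. Suppose $\mathcal{H}\notin\mathcal{H}^{III}$ minimizes $h$; then some vertex $u$ has $d_u\ge 3$. Since $\mathcal{H}$ is bicyclic it contains two cycles, and I would organize the argument by the position of $u$ relative to the cyclic core (on a cycle, on the vertex or path shared by the two cycles, or off the core on a pendent subtree), exactly paralleling Cases~1 and~2 of the proof of Lemma~\ref{Lem:5}(a). In each case I would locate a longest pendent path $P$ terminating at a pendent vertex $u_s$ with $d_{u_s}=1$, and then move a suitable edge (or pair of edges, as in Subcase~1.2 there) so as to lower $d_u$ by one and raise $d_{u_s}$ by one while preserving connectedness; Proposition~\ref{Prop:6} then guarantees the new hypergraph $\mathcal{H}'$ is still bicyclic, and Lemma~\ref{Lem:1} gives $h(\mathcal{H})>h(\mathcal{H}')$, the contradiction. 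Finally, for any $\mathcal{H}\in\mathcal{H}^{III}$ a degree count (writing $p$ for the number of pendent vertices and $q$ for the number of degree-two vertices, one has $p+q=n$ and $p+2q=km$, whence $q=km-n=m+1$) yields $h(\mathcal{H})=2(m+1)\log 2$.

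For part (b) I would mirror the three-claim scheme of Lemma~\ref{Lem:5}(b), using Lemma~\ref{Lem:3} to increase $h$ at each step. Claim~1: a vertex of maximum degree may be taken to lie on the cyclic core, since otherwise moving an edge of the core toward the off-core maximum-degree vertex raises $h$. Claim~2: each cycle can be contracted to a $2$-cycle (two edges sharing two vertices) by moving the remaining cycle edges onto the maximum-degree vertex. Claim~3: $\Delta(\mathcal{H})=m$, which forces every edge to pass through the center. Combining the three claims pins the degree sequence down to one vertex of degree $m$, two vertices of degree $2$, and $m(k-1)-4$ pendent vertices (the degree sum is $m+4+(m(k-1)-4)=km$, as required), so that $h(\mathcal{H})=m\log m+4\log 2$.

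The main obstacle is the final step of (b): unlike the unicyclic case, there are two topologically distinct ways to realize two $2$-cycles at the center — the two $2$-cycles may share a common edge or be edge-disjoint — and both produce the same extremal degree sequence. Showing that these are precisely the hypergraphs $\mathcal{H}^{IV}$ and $\mathcal{H}^{V}$ of Figure~\ref{bicyclic}, and that no other bicyclic configuration attains the bound, is where the case analysis is heaviest. I expect the additional bookkeeping relative to Lemma~\ref{Lem:5} to concentrate there, and in part (a) in verifying that each intermediate hypergraph $\mathcal{H}'$ remains connected so that Proposition~\ref{Prop:6} applies.
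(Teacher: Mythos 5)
Your proposal matches the paper's proof in essentially every respect: part (a) is the same contradiction argument (case analysis on the position of the degree-$\geq 3$ vertex relative to a cycle, edge-moving together with Lemma \ref{Lem:1} and Proposition \ref{Prop:6}, then a degree count for the equality case), and part (b) is the same scheme of structural claims established via Lemma \ref{Lem:3}. The only cosmetic difference is that the paper's first claim in (b) states that every pendent edge must contain the maximum-degree vertex, whereas you import Claim 3 of Lemma \ref{Lem:5} (that $\Delta(\mathcal{H})=m$); both serve to force all edges through the center, and on the final identification of $\mathcal{H}^{IV}$ and $\mathcal{H}^{V}$ --- the step you flag as the heaviest --- the paper is no more detailed than you are, as it simply asserts that this follows from its three claims.
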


 \begin{figure}[h]
 \subfloat[$\mathcal{H}^{IV}$]{
\label{DirSample_frac01}
  \begin{minipage}[c]{0.4\textwidth}
\centering
\begin{tikzpicture}[scale=2.0]
\draw[rotate around={55:(0,0)},blue] (0,0) ellipse (0.55 and 0.12);
\filldraw [black]
    %shengluehao
(0.1,-0.3438)circle (0.3pt)
(0.25,-0.3438)circle (0.3pt)
(0.4,-0.3438)circle (0.3pt)
    %zuoxiao
    (0,0) circle (0.3pt)
    (0.12,0.13)circle (0.7pt)
    (-0.05,-0.065)circle (0.3pt)
    ( -0.2500,   -0.3438)circle (0.7pt)
    (-0.1250,   -0.1719)circle (0.3pt)
    %youxiao
    (0.3750,0.1719)circle (0.7pt)
    (0.5,0)circle (0.3pt)
    (0.57,-0.085950)circle (0.3pt)
    ( 0.7500 ,  -0.3438)circle (0.7pt)
    (0.6250,   -0.1719)circle (0.3pt)
    %xiangjiaosandian
    (0.25000 ,0.34380)circle (0.7pt)
    (0.4220,0.3894)circle (0.7pt)
    (0.53,0.405)circle (0.7pt)
    (0.7,0.4693)circle (0.7pt)
    (0.842,0.52145)circle (0.3pt)
    (0.90,0.535)circle (0.3pt)
    (0.9760,    0.5536)circle (0.3pt)
    ( 1.1080,    0.59062)circle (0.7pt)
    %shangxiao
    (0.250,0.769)circle (0.3pt)
    (0.250,0.8627)circle (0.3pt)
    (0.2500,0.5564)circle (0.7pt)
    (0.2500,    0.9816)circle (0.3pt)
    (0.2500 ,   1.1942)circle (0.7pt)
  %zuohengda
   (-0.35540,    0.245)circle (0.7pt)
   (-0.230 ,   0.26)circle (0.3pt)
   (-0.09 ,   0.28)circle (0.3pt)
   (-0.15 ,   0.27)circle (0.3pt)
   (0.03 ,   0.31)circle (0.7pt);
 %B
\draw[rotate around={125:(0.5,0)},blue] (0.5,0) ellipse (0.55 and 0.12);
\draw[rotate around={15:(0.654,0.475)},blue] (0.654,0.475) ellipse (0.55 and 0.12);
\draw[rotate around={90:(0.250,0.769)},blue] (0.250,0.769) ellipse (0.55 and 0.12);
\draw[rotate around={8:(0.2500 ,   0.3438 )},blue] ( 0.0800,0.3438) ellipse (0.55 and 0.12);
\end{tikzpicture}
\end{minipage}
}
\subfloat[$\mathcal{H}^{V}$]{
\label{DirSample_frac02}
  \begin{minipage}[c]{0.40\textwidth}
\centering
\begin{tikzpicture}[scale=2.0]
\draw[rotate around={55:(0,0)},blue] (0,0) ellipse (0.55 and 0.12);
\filldraw [black]
%zuoxiao
    (0,0) circle (0.3pt)
    (-0.06,-0.0900) circle (0.3pt)
    (0.12,0.13)circle (0.7pt)
    ( -0.2500,-0.3438)circle (0.7pt)
    (-0.1250,-0.1719)circle (0.3pt)
    %youxiao
    (0.3250,0.1919)circle (0.7pt)
   (0.4375,0.08595)circle (0.7pt)
    (0.5,0)circle (0.3pt)
    (0.563,-0.08595)circle (0.3pt)
    ( 0.7500,-0.3438)circle (0.7pt)
    (0.6250,-0.1719)circle (0.3pt)
 %center
(0.25000 ,0.34380)circle (0.7pt)
 %%shengluehao
%(0.4,0.769)circle (0.3pt)
%(0.6,0.729)circle (0.3pt)
%(0.5,0.749)circle (0.3pt)
%shengluehao
(0.1,-0.3438)circle (0.3pt)
(0.25,-0.3438)circle (0.3pt)
(0.4,-0.3438)circle (0.3pt)
    %hengyou
    (0.4520,0.39094)circle (0.7pt)
    (0.8560,0.5406)circle (0.3pt)
    (0.624,0.435)circle (0.7pt)
    (0.75,0.49075)circle (0.3pt)
    (0.954,0.58)circle (0.3pt)
     ( 1.0580,0.6062)circle (0.7pt)
   %hengzuo
   (-0.1840,    0.2726)circle (0.3pt)
   (0.0480 ,   0.33)circle (0.7pt)
   (-0.270 ,   0.25)circle (0.3pt)
   (-0.10 ,   0.29)circle (0.3pt)
   (-0.3960 ,   0.2370)circle (0.7pt)
   %shangdian
   (0.250,    0.5557)circle (0.7pt)
   (0.2250,    0.657)circle (0.3pt)
   (0.19  ,  0.7576)circle (0.3pt)
   (0.160,    0.8595)circle (0.3pt)
   (0.13,    1.0)circle (0.7pt);
 %B
\draw[rotate around={125:(0.5,0)},blue] (0.5,0) ellipse (0.55 and 0.12);
\draw[rotate around={18:(0.654,0.475)},blue] (0.654,0.475) ellipse (0.55 and 0.12);
\draw[rotate around={102:(0.1250,0.5157)},blue] (0.20,0.40) ellipse (0.55 and 0.12);
\draw[rotate around={10:(0.00,0.3082)},blue] (0.00,0.3082) ellipse (0.55 and 0.12);
\end{tikzpicture}
\end{minipage}
}
\caption{The bicyclic $k$-uniform hypergraphs $\mathcal{H}^{IV}$ and $\mathcal{H}^{V}$.}\label{bicyclic}
\end{figure}
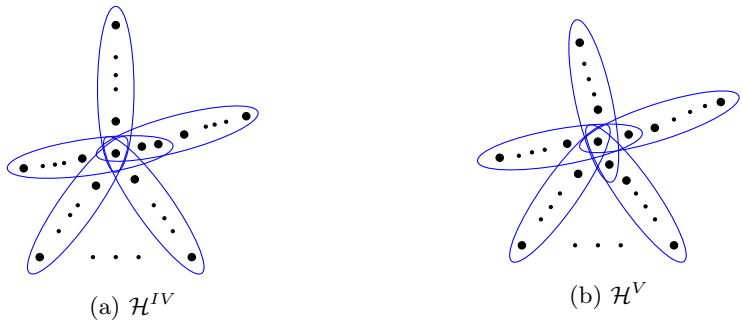

 \begin{proof}We prove (a) by contradiction. Suppose that $\mathcal{H}\notin \mathcal{H}^{III}$ attains the minimum value among all bicyclic $k$-uniform ($k\geq3$) hypergraphs on $n$ vertices with $m=\frac{n+1}{k-1}\ge2$ edges. Then, there exists at least one vertex $u$ with $d_u\geq 3$.
Let $e$ denote a non-pendent edge containing $u$ and
   $C=v_1e_1v_2e_2\cdots e_tv_{t+1}~(v_{t+1}=v_1)$ be a cycle in $\mathcal{H}$.

    \noindent{\bf Case 1.} $u\in C$.

 Without loss of generality, we assume $e=e_1$ and $u\in e_1$.
Find a longest path $P=uf_1u_1f_2\cdots f_su_s$ starting at $u$ such that $u_i\notin C$ and $f_i\notin C$ for $i=1,2,\ldots,s$.
Obviously, $d_{u_s}=1$.

\noindent{\bf Subcase 1.1} $u=v_1$ or $v_2$.

Without loss of generality, we assume $u=v_1$. Then $d_{v_1}\geq3$.
Let $\mathcal{H}'=(V_{\mathcal{H}},E_{\mathcal{H}'})$ be the hypergraph with $E_{\mathcal{H}'}=(E_{\mathcal{H}}\setminus \{e_1\})\cup \{e_1'\}$, where $e_1'=(e_1\setminus\{v_1\})\cup\{u_s\}$. By Proposition \ref{Prop:6}, $\mathcal{H}'$ is also a bicyclic $k$-uniform hypergraph. Note that $\pi(\mathcal{H})=(\ldots, d_{v_1}, \ldots, d_{u_s},\ldots)$,
$\pi(\mathcal{H}')=(\ldots, d_{v_1}-1, \ldots, d_{u_s}+1,\ldots)$, and $d_{v_1}\geq 3=d_{u_s}+2$. Then, by Lemma \ref{Lem:1}, we obtain that $h(\mathcal{H})>h(\mathcal{H}')$, a contradiction.

\noindent{\bf Subcase 1.2} $u\neq v_1,v_2$.

Let $f_1\not=e_1$ denote another edge containing $u$.
Let $\mathcal{H}'=(V_{\mathcal{H}},E_{\mathcal{H}'})$ be the hypergraph with $E_{\mathcal{H}'}=(E_{\mathcal{H}}\setminus \{e_1, f_1\})\cup \{e_1',f_1'\}$, where $e_1'=(e_1\setminus\{v_1\})\cup\{u_s\}$ and $f_1'=(f_1\setminus\{u\})\cup\{v_1\}$. By Proposition \ref{Prop:6}, $\mathcal{H}'$ is also a bicyclic $k$-uniform hypergraph. Note that $\pi(\mathcal{H})=(\ldots, d_{u}, \ldots, d_{u_s},\ldots)$, $\pi(\mathcal{H}')=(\ldots, d_u-1, \ldots, d_{u_s}+1,\ldots)$, and $d_{u}\geq 3=d_{u_s}+2$. Then, by Lemma \ref{Lem:1}, we obtain that $h(\mathcal{H})>h(\mathcal{H}')$, a contradiction.

\noindent{\bf Case 2.} $u\notin C$.

Find a longest path $Q=vf_1u_1f_2\cdots f_su_s$ ($u_i\notin C$ and $f_i\notin C$ for $i=1,2,\ldots, s$) starting at $v$ such that $v\in e_i$ for some $i\in \{1,2,\ldots, t\}$ and $f_{j-1}\cap f_j=\{u\}$ for some $j\in \{1,2,\ldots, s\}$.
Let $\mathcal{H}'=(V_{\mathcal{H}},E_{\mathcal{H}'})$ be the hypergraph with $E_{\mathcal{H}'}=(E_{\mathcal{H}}\setminus \{e_i, f_j\})\cup \{e_i',f_j'\}$, where $e_i'=(e_i\setminus\{v_i\})\cup\{u_s\}$ and $f_j'=(f_j\setminus\{u\})\cup\{v_i\}$. By Proposition \ref{Prop:6}, $\mathcal{H}'$ is also a bicyclic $k$-uniform hypergraph. Note that $\pi(\mathcal{H})=(\ldots,d_u,\ldots,d_{u_s},\ldots)$, $\pi(\mathcal{H}')=(\ldots,d_u-1,\ldots,d_{u_s}+1,\ldots)$, and $d_u\geq 3=d_{u_s}+2$. Then, by Lemma \ref{Lem:1}, we obtain that $h(\mathcal{H})>h(\mathcal{H}')$, a contradiction.

Thus, the discussions above imply that only $\mathcal{H}\in \mathcal{H}^{III}$ can attain the minimum value among all bicyclic $k$-uniform ($k\geq3$) hypergraphs on $n$ vertices with $m=\frac{n+1}{k-1}\ge2$ edges. By simple computation, we have $h(\mathcal{H})=2(m+1)\log2$ for any $\mathcal{H}\in \mathcal{H}^{III}$. This completes the proof of (a).\medskip

Next, we give the proof of (b). Suppose that $\mathcal{H}$ attains the maximum value among all bicyclic $k$-uniform ($k\geq3$) hypergraphs on $n$ vertices with $m=\frac{n+1}{k-1}\ge2$ edges. Let $u$ be a vertex of the maximum degree.\medskip

\noindent{\bf Claim 1.} \emph{If $e\in E_{\mathcal{H}}$ has $k-1$ pendent vertices, then $u\in e$.} \medskip

Otherwise, suppose that $e\in E_{\mathcal{H}}$ has $k-1$ pendent vertices but $u\notin e$. Let $w\in e$ be the vertex with $d_w>1$. Construct a new hypergraph $\mathcal{H}'=(V_{\mathcal{H}},E_{\mathcal{H}'})$ with $E_{\mathcal{H}'}=(E_{\mathcal{H}}\setminus \{e\})\cup \{e'\}$ where $e'=(e\setminus \{w\})\cup \{u\}$ and $d_w>1$. By Proposition \ref{Prop:6}, $\mathcal{H}'$ is also a bicyclic $k$-uniform hypergraph. Note that $\pi(\mathcal{H}) = (\ldots,d_{u},\ldots,d_{w},\ldots)$, $\pi(\mathcal{H}')=(\ldots,d_{u}+1,\ldots,d_{w}-1,\ldots)$, and $d_u>d_w-1$. By Lemma \ref{Lem:3}, we have $h(\mathcal{H})< h(\mathcal{H}')$, a contradiction.
\medskip

\noindent{\bf Claim 2.} \emph{$u$ is a common vertex of all the cycles of $\mathcal{H}$.} \medskip

Otherwise, assume $u$ does not lie on the cycle $C=v_1e_1v_2e_2\cdots e_tv_{t+1}~(v_{t+1}=v_1)$ of $\mathcal{H}$. Find a path $P_0=vf_1u_1f_2\cdots u_{s-1}f_su$ ($u_i\notin C$ for $i=1,2,\ldots, s-1$ and $f_j\notin C$ for $j=1,2,\ldots, s$) starting at $v$ such that $v\in e_i$ for some $i\in \{1,2,\ldots, t\}$.
Let $\mathcal{H}'=(V_{\mathcal{H}},E_{\mathcal{H}'})$ be the hypergraph with $E_{\mathcal{H}'}=(E_{\mathcal{H}}\setminus \{e_i,f_1\})\cup \{e_i',f_1'\}$ where $e_i'=(e_i\setminus \{v_i\})\cup \{u\}$, $f_1'=(f_1\setminus \{v\})\cup \{v_i\}$. By Proposition \ref{Prop:6}, $\mathcal{H}'$ is also a bicyclic $k$-uniform hypergraph. Note that $\pi(\mathcal{H}) = (\ldots,d_{u},\ldots,d_{v},\ldots)$,
$\pi(\mathcal{H}')=(\ldots,d_{u}+1,\ldots,d_{v}-1,\ldots)$, and $d_u>d_v-1$. By Lemma \ref{Lem:3}, we get that  $h(\mathcal{H})< h(\mathcal{H}')$, a contradiction.
\medskip

\noindent{\bf Claim 3.} \emph{$|e(C)|=2$ for any cycle $C$ in $\mathcal{H}$,  where $e(C)=\{e ~|~ e \text{ is an edge} \in C\}$. }\medskip

Again, we use $C=v_1e_1v_2e_2\cdots e_tv_{t+1}~(v_{t+1}=v_1)$ to denote a cycle in $\mathcal{H}$. Without loss of generality, suppose that  $v_2\not=u\in e_1$ ($u$ can be equal to $v_1$). If $|e(C)|\ge3$,
then let $\mathcal{H}'=(V_{\mathcal{H}},E_{\mathcal{H}'})$ be the hypergraph with $E_{\mathcal{H}'}=(E_{\mathcal{H}}\setminus \{e_2\})\cup \{e_2'\}$ where $e_2'=(e_2\setminus \{v_2\})\cup \{u\}$. By Proposition \ref{Prop:6}, $\mathcal{H}'$ is also a bicyclic $k$-uniform hypergraph.  Note that $\pi(\mathcal{H}) = (d_u,\ldots,d_{v_2},\ldots)$,
$\pi(\mathcal{H}')=(d_u+1,\ldots,d_{v_2}-1,\ldots)$, and $d_u>d_{v_2}-1$. By Lemma \ref{Lem:3}, we obtain that $h(\mathcal{H})< h(\mathcal{H}')$, a contradiction.

%If $|e(C)|\geq4$, then let $\mathcal{H}'=(V_{\mathcal{H}},E_{\mathcal{H}'})$ be the hypergraph with $E_{\mathcal{H}'}=(E_{\mathcal{H}}\setminus \{e_3\})\cup \{e_3'\}$ where $e_3'=(e_3\setminus \{v_4\})\cup \{u\}$. By Proposition \ref{Prop:6},  $\mathcal{H}'$ is also a $k$-uniform bicyclic hypergraph. Note that $\pi(\mathcal{H}) = (d_u,\ldots,d_{v_4},\ldots)$, $\pi(\mathcal{H}')=(d_u+1,\ldots,d_{v_4}-1,\ldots)$, and $d_u>d_{v_4}-1$.  By Lemma \ref{Lem:3}, wehave $h(\mathcal{H})< h(\mathcal{H}')$, a contradiction. \medskip

By Claims 1-3, we obtain that $h(\mathcal{H})$ attains the maximum value among all bicyclic $k$-uniform ($k\geq3$) hypergraphs on $n$ vertices with $m=\frac{n+1}{k-1}\ge2$ edges if and only if $\mathcal{H}\cong \mathcal{H}^{IV}$ or $\mathcal{H}\cong \mathcal{H}^{V}$. By simple computation, we have $h(\mathcal{H})= m\log m+4\log 2$ for $\mathcal{H}\cong \mathcal{H}^{IV}$ or $\mathcal{H}\cong \mathcal{H}^{V}$. This completes the proof of (b).\qed
\end{proof}

By Lemma \ref{Lem:6} and Equality (\ref{Equ:3.1}), we have the following result.

\begin{theorem}\label{The:3}
Let $\mathcal{H}$ be a bicyclic $k$-uniform ($k\geq3$) hypergraph on $n$ vertices with $m=\frac{n+1}{k-1}\ge2$ edges, and let $\mathcal{H}^{III}$, $\mathcal{H}^{IV}$, $\mathcal{H}^{V}$ be as in Lemma \ref{Lem:6}. Then %(with $m=n'$ edges, where $n'=\frac{n}{k-1}$).
$$
\log(km)-\frac{m\log m+4\log 2}{km} \le I_d^1(\mathcal{H})\leq \log(km)-\frac{2(m+1)\log2}{km},
$$
where the first equality holds if and only if $\mathcal{H}\cong  \mathcal{H}^{IV}$ or $\mathcal{H}\cong  \mathcal{H}^{V}$, and the second equality holds if and only if $\mathcal{H}\in  \mathcal{H}^{III}$.
\end{theorem}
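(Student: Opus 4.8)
The plan is to derive Theorem~\ref{The:3} as an immediate consequence of Lemma~\ref{Lem:6} together with the identity in Equality~(\ref{Equ:3.1}), exploiting the fact that the latter expresses $I_d^1(\mathcal{H})$ as a strictly decreasing affine function of $h(\mathcal{H})$.

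First I would fix the ambient parameters. For every bicyclic $k$-uniform hypergraph $\mathcal{H}$ on $n$ vertices we have, by Proposition~\ref{Prop:3}, the relation $m(k-1)=n+1$, so that both $k$ and $m=\frac{n+1}{k-1}$ are constants determined by the class. Consequently $\log(km)$ is a fixed constant and $\frac{1}{km}>0$ is a fixed positive scaling factor. By Equality~(\ref{Equ:3.1}) we may write
$$
I_d^1(\mathcal{H})=\log(km)-\frac{1}{km}\,h(\mathcal{H}),
$$
which shows that $I_d^1(\mathcal{H})$ is strictly decreasing in $h(\mathcal{H})$: larger values of $h$ correspond to smaller values of $I_d^1$, and vice versa.

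Next I would feed the two bounds of Lemma~\ref{Lem:6} into this identity, reversing each inequality because of the negative sign in front of $h(\mathcal{H})$. From part~(a), the bound $h(\mathcal{H})\geq 2(m+1)\log2$ yields the upper estimate $I_d^1(\mathcal{H})\leq \log(km)-\frac{2(m+1)\log2}{km}$; from part~(b), the bound $h(\mathcal{H})\leq m\log m+4\log2$ yields the lower estimate $I_d^1(\mathcal{H})\geq \log(km)-\frac{m\log m+4\log2}{km}$. Because the map $h\mapsto \log(km)-\frac{h}{km}$ is a strict bijection, each extremal value of $I_d^1$ is attained exactly when the corresponding extremal value of $h$ is attained; thus the equality cases transfer verbatim from Lemma~\ref{Lem:6}. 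The upper bound for $I_d^1$ is attained precisely when $h$ is minimized, i.e.\ when $\mathcal{H}\in\mathcal{H}^{III}$, and the lower bound is attained precisely when $h$ is maximized, i.e.\ when $\mathcal{H}\cong\mathcal{H}^{IV}$ or $\mathcal{H}\cong\mathcal{H}^{V}$.

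There is essentially no hard step here: all of the combinatorial work---the edge-moving arguments of Lemmas~\ref{Lem:1} and~\ref{Lem:3}, together with Claims~1--3 identifying the extremal configurations---has already been absorbed into Lemma~\ref{Lem:6}. The only point requiring care is the direction reversal of the inequalities and the observation that $\frac{1}{km}$ is strictly positive, which guarantees that the monotone correspondence preserves the \emph{if and only if} equality characterizations.
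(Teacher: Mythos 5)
Your proposal is correct and matches the paper exactly: the paper derives Theorem~\ref{The:3} directly from Lemma~\ref{Lem:6} and Equality~(\ref{Equ:3.1}), precisely by viewing $I_d^1(\mathcal{H})=\log(km)-\frac{1}{km}h(\mathcal{H})$ as a strictly decreasing affine function of $h(\mathcal{H})$ so that the bounds and their equality characterizations transfer with reversed direction. Your additional remarks on the sign reversal and the bijectivity of the affine map merely make explicit what the paper leaves implicit.
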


\end{document}